\def\IC{{\mathbb C}}
\def\IF{{\mathbb F}}
\def\IR{{\mathbb R}}
\def\IN{{\mathbb N}}
\def\IT{{\mathbb T}}
\def\IQ{{\mathbb Q}}
\def\IZ{{\mathbb Z}}
\def\IT{{\mathbb T}}
\def\n{\noindent}
\def\dis{\displaystyle}
\def\r{\rightarrow}
\def\ov{\overline}
\def\wh{\widehat}
\def\ve{\varepsilon}
\def\cB{{\cal B}}
\def\cL{{\cal L}}
\def\cH{{\cal H}}
\def\cZ{{\cal Z}}
\def\cR{{\cal R}}
\def\cO{{\cal O}}
\newtheorem{theorem}{Theorem}[section]
\newtheorem{lemma}[theorem]{Lemma}
\newtheorem{corollary}[theorem]{Corollary}
\newtheorem{proposition}[theorem]{Proposition}
\newtheorem{definition}[theorem]{Definition}
\newtheorem{remark}[theorem]{Remark}
\begin{document}

\begin{center}
{\LARGE \bf On Ulam Stability}
\end{center}

\begin{center}
M. Burger, N. Ozawa, A. Thom
\end{center}

\section{Introduction}

The topic of this paper is part of the general question, exposed for instance by S. Ulam in his book ``A collection of mathematical problems'' \cite{Ula60}, of the stability under ``quasification'' of certain objects in algebra, topology and analysis; see Chapter VI of \cite{Ula60}. More specifically define, according to Ulam, a $\delta$-homomorphism between groups $\Gamma, G$, where $G$ is equipped with a distance $d$, as a map
\begin{equation*}
\mu: \Gamma \rightarrow G
\end{equation*}
such that
\begin{equation*}
d(\mu(xy), \,\mu(x)\, \mu(y)) < \delta \;\; \mbox{for all} \;\; x,y \in \Gamma\,.
\end{equation*}

\medskip\n
The question is then in which situations such a map is ``close'' to an actual homomorphism. This problem has been considered and treated by Hyers and Ulam in the case where $\Gamma$ is a Banach space and $G = \IR$ the additive group of reals; when $\Gamma$ is an arbitrary group and $G = \IR$, a $\delta$-homomorphism is a quasi-homomorphism, a notion pertaining to the theory of bounded cohomology (see \cite{Mon06}, \cite{Ca} and references therein). 

\medskip
In this paper we are interested in the case where $G= U(\cH)$ is the group of unitary operators of a Hilbert space $\cH$ and the distance on $U(\cH)$ is simply $d(T,S) = \|T-S\|$ where $\| \; \|$ denotes the operator norm; in this context we will speak of unitary $\delta$-representations. We introduce the notion of a group $\Gamma$ being strongly Ulam stable as well as Ulam stable which loosely means, in the first case that any unitary $\delta$-representation is $F_\Gamma(\delta)$-near an actual unitary representation, where $\lim_{\delta \rightarrow 0} F_\Gamma(\delta) = 0$, while in the second case we only require this property for finite dimensional representations, in which case we will denote $F^{fd}_\Gamma$ the analogue of $F_\Gamma$; see Section 1 where we define these objects precisely. With this terminology, the result motivating the question addressed in this paper is

\begin{theorem}\label{theo0.1} {\rm (Kazhdan \cite{Kaz82})}

\medskip
Assume that $\Gamma$ is amenable. Then $\Gamma$ is strongly Ulam stable, in fact
\begin{equation*}
\dis\frac{\delta}{2} \le F_\Gamma(\delta) \le \delta + 120 \delta^2, \quad \forall \delta < \dis\frac{1}{10}\;.
\end{equation*}
\end{theorem}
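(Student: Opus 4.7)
The plan is to prove the upper bound via invariant-mean averaging and the lower bound by an explicit $U(1)$-valued example. Since $\Gamma$ is amenable, I would fix a left-invariant mean $m$ on $\ell^\infty(\Gamma)$, extended coordinate-wise (using weak operator compactness of bounded sets in $B(\cH)$) to bounded operator-valued functions $\Gamma \to B(\cH)$. Given a $\delta$-representation $\mu$, I would then consider
\[
T(x) \;:=\; m_y\bigl[\mu(xy)\mu(y)^{*}\bigr].
\]
The hypothesis $\|\mu(xy) - \mu(x)\mu(y)\| < \delta$ gives $\|\mu(xy)\mu(y)^{*} - \mu(x)\| < \delta$ for every $y$, hence $\|T(x) - \mu(x)\| \le \delta$; in particular $T(x)$ is invertible once $\delta < 1$, and a short calculation using left-invariance of $m$ yields the approximate identity $\|T(xz) - T(x)\mu(z)\| \le \delta$.

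An honest unitary representation $\pi$ close to $\mu$ is then extracted from $T$. There are two essentially equivalent routes: (i) pass to the polar part $\pi(x) := T(x)|T(x)|^{-1}$ and verify exact multiplicativity using the spectral uniqueness of the polar factor near $U(\cH)$; or (ii) average the defect cocycle $c(x,y) := \mu(xy)^{-1}\mu(x)\mu(y)$, which satisfies $\|c(x,y) - I\| < \delta$, against $m$ so as to produce a $1$-cochain $\phi$ whose coboundary trivialises $c$, giving an explicit correction $\mu \mapsto \pi := \phi\mu$ --- this is the bounded-cohomology vanishing for amenable $\Gamma$ realised in this non-abelian setting. The quantitative estimate $\|\pi(x) - \mu(x)\| \le \delta + 120\delta^2$ then follows from the triangle inequality $\|\pi - \mu\| \le \|\mu - T\| + \|T - \pi\|$ combined with the expansion
\[
|T(x)| - I \;=\; \tfrac{1}{2}\bigl(T(x)^{*}T(x) - I\bigr) \;+\; O\bigl(\|T(x)^{*}T(x) - I\|^{2}\bigr);
\]
the restriction $\delta < 1/10$ keeps the relevant spectra inside the analytic domain of $\sqrt{\cdot}$, and tracking the second-order terms produces the explicit constant $120$.

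For the lower bound $F_\Gamma(\delta) \ge \delta/2$, I would exhibit the explicit $\delta$-representation $\mu: \Gamma \to U(1)$ with $\mu(e) = 1$ and $\mu(g) = e^{i\theta}$ for $g \ne e$, choosing $\theta$ so that the defect $2\sin\theta$ equals $\delta$; the identity $2\sin(\theta/2) = \delta/(2\cos(\theta/2)) \ge \delta/2$ then shows that every character of $\Gamma$ is at distance at least $\delta/2$ from $\mu$. The hardest step will be executing route (i) or (ii) above: upgrading the approximate multiplicativity of $T$ (respectively the approximate vanishing of $c$) to exact multiplicativity of $\pi$, while simultaneously retaining the sharp leading coefficient $1$ of $\delta$ in the distance estimate. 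This delicate balance is what forces the polar-correction or cocycle-splitting step to be implemented with care at second order, and is precisely what produces the explicit constant $120$.
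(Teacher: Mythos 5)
Your high-level strategy---average against a left-invariant mean, then correct by polar decomposition---is exactly the route taken by the paper (Theorem 2.2, following Shtern). However, there are two genuine gaps that would prevent the argument as sketched from reaching the conclusion with the claimed constant.

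First, a single round of averaging and polar correction does not produce an exact representation, and your assertion that one can ``verify exact multiplicativity using the spectral uniqueness of the polar factor'' is false. The paper's proof instead shows that averaging improves the defect \emph{quadratically}: if $\pi'(h) := \int_\Gamma \pi(x)^*\pi(xh)\,dx$, then the key computation is
\begin{equation*}
\int_\Gamma (\pi(xg)-\pi(x)\pi(g))^*(\pi(xh)-\pi(x)\pi(h))\,dx
= \pi'(g^{-1}h) - \pi'(g)^*\pi'(h) + (\pi'(g)-\pi(g))^*(\pi'(h)-\pi(h)),
\end{equation*}
which gives $\|\pi'(g^{-1}h)-\pi'(g)^*\pi'(h)\| \le 2\varepsilon^2$. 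After the polar correction $\pi_1(g)=\pi'(g)|\pi'(g)|^{-1}$ one obtains a genuine unitary map but with defect roughly $(6+3C)\varepsilon^2$, \emph{not} zero. One must iterate $\pi\to\pi_1\to\pi_2\to\cdots$; the geometric decay of the defects produces the constant $120$ from the series $\varepsilon + (6+3C)\varepsilon^2 + (6+3C)^3\varepsilon^4 + \cdots$. This iterative structure is entirely absent from your sketch, and the restriction $\delta<1/10$ is needed precisely to make this series converge, not merely to stay inside the domain of the square root.

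Second, your intermediate estimate $\|T(xz)-T(x)\mu(z)\|\le\delta$ is only first-order in $\delta$, so even with the iteration supplied, your particular averaged function $T(x)=m_y[\mu(xy)\mu(y)^*]$ does not manifestly exhibit the quadratic gain. The paper's choice $\pi'(h)=\int_\Gamma\pi(x)^*\pi(xh)\,dx$ is arranged so that the substitution $x\mapsto xg^{-1}$ under the invariant mean turns the cross term $\int\pi(xg)^*\pi(xh)\,dx$ into $\pi'(g^{-1}h)$, which is what makes the quadratic cancellation visible. You would need to carry out a similar computation (or switch to the paper's normalization) before the polar step. For the lower bound, the paper's construction takes $\mu$ equal to $1$ everywhere except at a single $\gamma_0\neq e$, where $|\mu(\gamma_0)-1|=\delta/2$; this yields defect at most $\delta$ and distance $\delta/2$ to the trivial homomorphism directly, and is somewhat cleaner than your $\mu\equiv e^{i\theta}$ off the identity, which requires more care about nontrivial characters of $\Gamma$.
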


This suggests the question of identifying natural classes of groups which are Ulam stable or strong Ulam stable. Concerning Ulam stability, D.~Kazhdan in the same article had given examples of $\frac{1}{n}$-representations of a compact surface group (of genus $\ge 2$) in $\IC^n$ which are $10^{-1}$-away from any unitary representation. Here we recall a construction of P.~Rolli giving in every finite dimension $n$, $\delta$-representations of the free group on two generators with dense image in $U(n)$ and $(2-\frac{\delta}{3})$-away from any unitary representation, $\forall \delta > 0$. Thus Ulam stability fails for non-abelian free groups. We show more generally that if the comparison map
\begin{equation}\label{*}\tag{*}
H^2_b(\Gamma, \IR) \rightarrow H^2(\Gamma, \IR)
\end{equation}

\n
is not injective, then $\Gamma$ is not Ulam stable, in fact we get the bound $F_\Gamma^{fd}(\delta) \ge \sqrt{3}$, $\forall \delta > 0$. Returning to strong Ulam stability, we show (Section 1) that if $\Gamma > \Lambda$, and $\Gamma$ is strongly Ulam stable, then $\Lambda$ is Ulam stable.  As a corollary to an effective version of this statement we obtain,

\begin{theorem}\label{theo0.2}
If $\Gamma$ contains a non-abelian free group, then $\Gamma$ is not strongly Ulam stable. In fact 
\begin{equation*}
F_\Gamma(\delta) \ge \frac{\sqrt{3}}{16}, \quad \forall \delta > 0\,.
\end{equation*}
\end{theorem}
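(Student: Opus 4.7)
The strategy is to combine Rolli's construction, which for every $n \ge 1$ and every $\delta > 0$ provides a finite-dimensional $\delta$-representation $\mu \colon F_2 \to U(n)$ that is $(2-\delta/3)$-far from any unitary representation, with an effective restriction principle: if $\Lambda < \Gamma$ and $\Gamma$ is strongly Ulam stable, then $F^{fd}_\Lambda(\delta) \le 16\, F_\Gamma(\delta)$. Applying this to any non-abelian free subgroup $F_2 < \Gamma$ and inserting Rolli's bound will then yield $F_\Gamma(\delta) \ge \sqrt{3}/16$.

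To set up the restriction principle I would fix coset representatives $\{s_i\}$ for $\Lambda \backslash \Gamma$ with $s_0 = e$ and, given a finite-dimensional $\delta$-representation $\mu \colon \Lambda \to U(n)$, form the induced quasi-representation $T = \mathrm{Ind}_\Lambda^\Gamma \mu$ on $\cH = \ell^2(\Lambda \backslash \Gamma) \otimes \IC^n$ via the usual cocycle $\lambda(g,i) = s_i g s_{\sigma(g)(i)}^{-1} \in \Lambda$. A block-by-block inspection shows that $\|T(gh) - T(g)T(h)\| \le \delta$, since the operator splits into blocks of norm at most $\delta$ mapping between disjoint summands. Crucially, for $\lambda \in \Lambda$ the permutation $\sigma(\lambda)$ fixes the index $0$ and $T(\lambda)$ preserves the corner $V_0 = \IC^n$ with $T(\lambda)|_{V_0} = \mu(\lambda)$. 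Strong Ulam stability of $\Gamma$ then yields $\pi \colon \Gamma \to U(\cH)$ with $\|T(g) - \pi(g)\| \le \eta := F_\Gamma(\delta)$, and writing $\beta(\lambda) = P_0 \pi(\lambda) P_0$ one derives $\|\beta(\lambda) - \mu(\lambda)\| \le \eta$, $\|(1-P_0)\pi(\lambda) P_0\| \le \eta$, and $\|\beta(\lambda\lambda') - \beta(\lambda)\beta(\lambda')\| \le \eta$, the last via the identity $\beta(\lambda\lambda') - \beta(\lambda)\beta(\lambda') = P_0\pi(\lambda)(1-P_0)\pi(\lambda')P_0$ together with unitarity of $\pi$. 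In short, $V_0$ is an $\eta$-almost invariant $n$-dimensional subspace for $\pi|_\Lambda$.

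The decisive and most delicate step is to convert these estimates into the existence of a genuine unitary representation $\rho \colon \Lambda \to U(n)$ with $\|\rho(\lambda) - \mu(\lambda)\| \le 16\,\eta$, which contradicts Rolli's bound once $16\,\eta < 2 - \delta/3$. The natural route is to polar-decompose the near-isometric compression $P_0\pi(\cdot)P_0$ and to produce a $\pi|_\Lambda$-invariant finite-dimensional subspace $V_0' \subset \cH$ at distance $O(\eta)$ from $V_0$, so that transporting $\pi|_{V_0'}$ back to $\IC^n$ by a partial isometry yields $\rho$; the constant $16$ then arises from a careful chain of triangle inequalities and unitarity manipulations. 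This extraction is the main obstacle, while the induction construction and the insertion of Rolli's bound are essentially mechanical. Once it is in place, substituting $\Lambda = F_2$ gives $F_\Gamma(\delta) \ge (2-\delta/3)/16 \ge \sqrt{3}/16$ for all $\delta \le 6 - 3\sqrt{3}$, and the monotonicity of $F_\Gamma$ in $\delta$ delivers the bound for the remaining range.
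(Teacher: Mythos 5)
Your proposal is correct and shares the paper's overall skeleton: pass a lower bound on $F^{fd}_{\IF_2}$ through the induction/restriction inequality $F^{fd}_\Lambda(\delta)\le 16\,F_\Gamma(\delta)$ of Corollary~\ref{cor1.7}. The genuine difference is where that lower bound comes from. The paper obtains it from bounded cohomology: Lemma~\ref{lem2.3} exponentiates a homogeneous quasimorphism to a circle-valued map and Corollary~\ref{cor2.4} deduces ${\rm def}^{(1)}(\IF_2)\ge\sqrt 3$, which is exactly where the $\sqrt 3$ in the stated constant originates. You instead use Rolli's explicit $\varepsilon$-representations (Proposition~\ref{prop2.2}), which give $D(\mu)\ge 2-\delta/3$ in every dimension, hence $F^{fd}_{\IF_2}(\delta)\ge 2-\delta/3$; combined with Corollary~\ref{cor1.7} and monotonicity this in fact yields the slightly sharper bound $F_\Gamma(\delta)\ge 1/8$ for all $\delta>0$. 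Your route is more elementary and gives a better constant for free subgroups; the paper's quasimorphism route is preferred because it applies verbatim to any subgroup $\Lambda$ with non-injective comparison map $H^2_b(\Lambda,\IR)\to H^2(\Lambda,\IR)$, which is what Corollary~\ref{cor2.6} records. One remark on your middle step: the ``decisive and most delicate'' extraction you flag --- producing a $\pi|_\Lambda$-invariant finite-dimensional subspace near the corner $V_0$ and transporting $\pi$ back to $\IC^n$ --- is precisely the content of Lemma~\ref{lem1.6} (circumcenter in the Hilbert--Schmidt space to get an invariant projection $Q$ with $\|P-Q\|\le 4\delta$) together with the polar-decomposition argument in Proposition~\ref{prop1.5}, so rather than re-derive it you could simply cite those results, which is also where the factor $16$ is accounted for.
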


In view of Kazhdan's theorem, it is thus natural to wonder whether strong Ulam stability characterizes amenability.

\medskip
The comparison map (\ref{*}) fails to be injective for instance if $\Gamma$ is non-elementary word hyperbolic, or a lattice in a connected simple Lie group of real rank $1$; in particular $\Gamma$ is then not Ulam stable. In contrast, when $\Gamma$ is an irreducible lattice in a connected semisimple Lie group of rank at least two, the comparison map (\ref{*}) is injective, in fact every $\IR$-valued quasihomomorphism is bounded,  (\cite{BuMo} Thm. 20,~21). Concerning the more general question of Ulam stability, which is still open, we have two partial results. The first one concerns lattices for which certain results about bounded generation of congruence subgroups are available, namely:

\begin{theorem}\label{theo0.3}
Let $\cO$ be the ring of integers of a number field, $S \subset \cO$ a multiplicatively closed subset and $\cO_S$ the corresponding localization. Then, for every $n \ge 3$, the group $SL(n, \cO_S)$ is Ulam stable.
\end{theorem}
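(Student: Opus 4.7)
\medskip\noindent\textbf{Plan.} The strategy would combine Kazhdan's strong Ulam stability for amenable groups (Theorem~\ref{theo0.1}) with bounded elementary generation of $SL(n,\cO_S)$ for $n\ge 3$, due to Carter--Keller and Morris: there is an integer $N=N(n,\cO_S)$ such that every $g \in SL(n,\cO_S)$ can be written as a product of at most $N$ elementary matrices $E_{ij}(a)$, $i\ne j$, $a\in\cO_S$. The goal is to correct a given $\delta$-representation $\mu$ on enough amenable subgroups that the corrections glue, via the Steinberg presentation, to a genuine unitary representation $\pi$ globally close to $\mu$.

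First, given a finite-dimensional unitary $\delta$-representation $\mu:SL(n,\cO_S)\to U(\cH)$ with $\delta$ sufficiently small, I would apply Theorem~\ref{theo0.1} not only to each abelian root subgroup $U_{ij}=\{E_{ij}(a):a\in\cO_S\}\cong(\cO_S,+)$, but more importantly to the full upper- and lower-triangular unipotent subgroups $U_+,U_-$, which are nilpotent hence amenable. This yields honest unitary representations $\pi_\pm:U_\pm\to U(\cH)$ with $\|\pi_\pm(u)-\mu(u)\|\le \delta+120\delta^2$, and whose restrictions $\pi_{ij}$ to each $U_{ij}\subset U_\pm$ \emph{automatically} satisfy the internal Steinberg commutator relations $[\pi_{ij}(a),\pi_{jk}(b)]=\pi_{ik}(ab)$ and $[\pi_{ij}(a),\pi_{k\ell}(b)]=1$ whenever the indices call for it, because $\pi_\pm$ are genuine homomorphisms. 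Similar corrections on the standard Borel-type amenable (solvable) subgroups pin down the action of the diagonal torus on each $\pi_{ij}$.

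Second, the remaining \emph{mixed} Steinberg relations (those relating generators from $U_+$ to generators from $U_-$, realized by conjugation by a Weyl element) must be promoted from approximate identities — which hold within $O(\delta)$ because $\pi_\pm$ are close to $\mu$ and $\mu$ is a $\delta$-representation — to exact equalities. The key leverage is uniqueness of the Kazhdan correction: whenever a relation can be realized as an identity inside some amenable subgroup of $SL(n,\cO_S)$, any two unitary representations of that subgroup that are both pointwise close to $\mu$ must coincide, so the two sides of the relation agree on the nose. Once the full Steinberg presentation is verified for the $\pi_{ij}$'s, they assemble into a genuine homomorphism $\pi:SL(n,\cO_S)\to U(\cH)$ (using $n\ge 3$ so that $E(n,\cO_S)=SL(n,\cO_S)$ with no $K_2$ obstruction to well-definedness of the assembly). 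Bounded generation then controls the distance: expanding an arbitrary $g$ as a product of $\le N$ elementary matrices, telescoping gives $\|\mu(g)-\pi(g)\|\le C(N)\,\delta$, and hence $F^{fd}_{SL(n,\cO_S)}(\delta)=O(\delta)\to 0$.

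The main obstacle is the exactness argument in the second step: norm-closeness of unitaries does not by itself imply equality, so one has to exploit the combinatorics of the Steinberg presentation together with the uniqueness of Kazhdan's correction on amenable subgroups to rule out ``twists'' of the mixed relations by small unitary perturbations. The hypothesis $n\ge 3$ is decisive precisely here, as it is the regime in which $SL_n$ is boundedly generated by elementary matrices, admits a clean Steinberg presentation, and contains enough amenable subgroups (nilpotent unipotent radicals, solvable parabolics, Heisenberg-type subgroups) to realize each relation of the presentation internally — none of which is available for $n=2$.
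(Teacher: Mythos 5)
Your plan diverges from the paper's argument in a way that matters. The paper does \emph{not} attempt to patch together local Kazhdan corrections on $U_\pm$, $B$, etc.\ using a Steinberg presentation. Instead, the proof (Theorem~\ref{theo4.1}) is a reduction to a \emph{finite} quotient. Sketch: restrict the $\delta$-representation $\pi$ to one unipotent copy of $A^2$ and apply Kazhdan to get a nearby genuine representation $\mu$; use Johnson's rigidity (Theorem~\ref{theo3.2}) to show the \emph{character} of $\mu$ (an atomic probability measure on $\widehat{A^2}$) is invariant under the natural $SL(2,A)$-action; then the purely algebraic Lemma~\ref{lem4.4} forces this character to factor through $(A/qA)^2$ for some $q$, so $\mu$ is trivial on $(qA)^2$. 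This makes $\pi$ uniformly $O(\delta)$-close to the identity on $E^\bullet(n,A;qA)$, and bounded elementary generation (Theorem~\ref{theo4.3}) upgrades this to all of the finite-index normal subgroup $E(n,A;qA)$. The rest is a routine second application of Kazhdan on the finite (hence amenable) quotient $Q = SL(n,A)/E(n,A;qA)$, transported back via a section. Bounded generation is not used to telescope distances across elementary factors; it is used once, to pass from $E^\bullet$ to $E$.

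Your proposal contains a genuine gap at the step you yourself flag as the crux. You claim that ``any two unitary representations of \emph{[an amenable subgroup]} that are both pointwise close to $\mu$ must coincide,'' but Johnson's theorem gives \emph{strong rigidity}, i.e.\ conjugacy by a unitary $u$ with $\|u-Id\|$ small, not equality. Indeed, for any representation $\rho$ of any group and any unitary $u$ near the identity, $u\rho u^{-1}$ is a distinct representation uniformly close to $\rho$; uniqueness can never hold. So the mixed Steinberg relations can acquire small unitary twists that you have not ruled out, and the assembly step collapses. Separately, even if exactness held, gluing via the Steinberg presentation raises a $K_2$ obstruction — the Steinberg group $St(n,A)$ is a central extension of $E(n,A) = SL(n,A)$, generally with nontrivial (finite) kernel — and your appeal to $n\ge 3$ and $E(n,\cO_S)=SL(n,\cO_S)$ does not address it. The missing idea is precisely the paper's: use rigidity not to get ``uniqueness'' of corrections, but to deduce $SL(2,A)$-invariance of a character, which together with the congruence-type Lemma~\ref{lem4.4} localizes the whole problem to a finite quotient where amenability finishes the job.
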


The second result is a consequence of work of N.~Ozawa \cite{Ozawa} where he defines and studies property (TTT) for $SL(n,\IR)$ and its lattices; it leads to the following dimension dependent stability result:

\begin{theorem}\label{theo0.4}
Let $n \ge 3$ and $\Gamma < SL(n,\IR)$ be a lattice in $SL(n,\IR)$. Then, for any $d \in \IN$, there is a function $F_\Gamma^{(d)} : [0,\infty) \rightarrow [0,\infty)$ such that $\lim_{\delta \rightarrow 0^+} F_\Gamma^{(d)}(\delta) = 0$ and satisfies the following property: every $d$-dimensional unitary $\delta$-representation of $\Gamma$ is $F_\Gamma^{(d)}(\delta)$-close to a unitary representation.
\end{theorem}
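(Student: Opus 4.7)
The plan is to fix the target dimension $d$ and argue by contradiction via an ultraproduct/compactness reduction, so that the quantitative stability rate $F^{(d)}_\Gamma$ is produced indirectly from the qualitative rigidity statement provided by Ozawa's property (TTT). Concretely, suppose the conclusion fails for some $d$: then there exist $\delta_n \to 0^+$ and $\delta_n$-representations $\mu_n : \Gamma \to U(d)$ such that $\inf_\pi \sup_{g \in \Gamma} \|\mu_n(g) - \pi(g)\| \ge \varepsilon$ for some fixed $\varepsilon > 0$, where the infimum runs over genuine unitary representations $\pi : \Gamma \to U(d)$. The goal is to use property (TTT) to derive a contradiction from this sequence.

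The first step is to reinterpret the defect $c_n(g,h) := \mu_n(gh)\mu_n(h)^{-1}\mu_n(g)^{-1}$, which by hypothesis satisfies $\|c_n(g,h) - I\| < \delta_n \to 0$ uniformly in $g,h$. Fix a non-principal ultrafilter $\mathcal{U}$ on $\mathbb{N}$ and form the ultralimit $\tilde\mu := (\mu_n)_\mathcal{U} : \Gamma \to U(M_d(\mathbb{C})^\mathcal{U})$, where $M_d(\mathbb{C})^\mathcal{U}$ is the tracial ultrapower von Neumann algebra. Because $\|c_n(g,h) - I\| \to 0$ along $\mathcal{U}$, the map $\tilde\mu$ is a genuine group homomorphism into the unitary group of $M_d(\mathbb{C})^\mathcal{U}$. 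Compactness of $U(d)$ (which is crucial and is what restricts us to finite dimension) ensures $M_d(\mathbb{C})^\mathcal{U}$ is again a tracial von Neumann algebra of bounded ``dimension'', so $\tilde\mu$ factors through a finite von Neumann algebra of type $\mathrm{I}_d$.

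The second and decisive step is to invoke property (TTT) from \cite{Ozawa} for the lattice $\Gamma < SL(n,\mathbb{R})$, $n \ge 3$. The relevant consequence of (TTT) is a rigidity statement for homomorphisms $\Gamma \to U(\mathcal{M})$ into ultrapower (or more generally finite) von Neumann algebras: any such homomorphism is, up to unitary conjugation inside $\mathcal{M}$, the ultralimit of a sequence of honest finite-dimensional unitary representations of $\Gamma$. Applying this to $\tilde\mu$ yields a sequence of genuine representations $\pi_n : \Gamma \to U(d)$ with $\lim_\mathcal{U} \sup_g \|\mu_n(g) - \pi_n(g)\| = 0$, directly contradicting the assumed uniform lower bound $\varepsilon$. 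Once the contradiction is in hand, $F^{(d)}_\Gamma(\delta) := \sup\{\,\mathrm{dist}(\mu, \mathrm{Rep}(\Gamma, U(d))) : \mu \text{ a } \delta\text{-rep}\,\}$ automatically tends to $0$ as $\delta \to 0^+$.

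The main obstacle is calibrating property (TTT) into exactly the ultrapower form needed above; the input from \cite{Ozawa} is phrased as a rigidity/cohomological vanishing statement for quasi-cocycles into certain classes of unitary representations, and one must verify that $d$-dimensional almost-representations, after passage to the ultrapower, fall within the scope of that class so that (TTT) applies. A secondary issue is the lattice descent: property (TTT) is proved for $SL(n,\mathbb{R})$ directly, and one has to push it to $\Gamma$, which in \cite{Ozawa} is presumably done via induction of representations or by working at the level of the ambient Lie group and restricting back. Both steps are essentially imported from \cite{Ozawa}; once they are in place, the ultraproduct argument above converts qualitative rigidity into the desired quantitative function $F^{(d)}_\Gamma$.
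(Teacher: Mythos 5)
Your strategy of a compactness-contradiction argument is correct in outline, and the observation that compactness of $U(d)$ is what makes the argument go is also correct, but the key step is not carried out and the form of property (TTT) you invoke does not do the work you need. First, property (TTT) in \cite{Ozawa} is not phrased as a lifting theorem for homomorphisms $\Gamma \to U(\mathcal{M})$ into a tracial ultrapower; it is a statement about quasi-cocycles into unitary representations (and the closely related property $(T_Q)$: roughly, for quasi-representations with small defect, a pointwise almost-invariant vector must be \emph{uniformly} almost-invariant). Second, and more seriously, even if such a lifting statement held, the conclusion of the ultraproduct step would be that $\lim_{\mathcal U}\|\mu_n(g)-\pi_n(g)\|_2 = 0$ \emph{for each fixed $g$} (in the tracial norm, which is the ultrapower metric), not that $\lim_{\mathcal U}\sup_g\|\mu_n(g)-\pi_n(g)\| = 0$. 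Passing to the supremum over $g$ is precisely the content that needs property (TTT), and the ultrafilter machinery by itself does not provide it: knowing closeness on a generating set and letting defects tend to zero does not control $\|\mu_n(g)-\pi_n(g)\|$ uniformly because the error grows with word length. In short, you have built an apparatus that reproduces what compactness alone already gives (a pointwise limiting representation), and the decisive uniformity step is asserted rather than proved.

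The paper's proof avoids these issues. By a diagonal argument (using compactness of $U(d)$, no ultrafilter needed) one may assume $\pi_n \to \pi$ pointwise, where $\pi$ is automatically a representation. One then forms the map $\sigma_n : \Gamma \to B(HS(d))$, $\sigma_n(g)x = \pi_n(g)\,x\,\pi(g)^*$, where $HS(d)$ carries the normalized Hilbert--Schmidt inner product. One computes $\|\sigma_n(gh)I - \sigma_n(g)\sigma_n(h)I\|_{HS} \le 1/n$ uniformly in $g,h$, while $\|\sigma_n(g)I - I\|_{HS} = \|\pi_n(g)-\pi(g)\|_{HS}\to 0$ for each fixed $g$. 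Property (TTT) (via property $(T_Q)$ of \cite{Ozawa}, Section 3) upgrades this pointwise smallness of the almost-invariant vector to uniform smallness: $\sup_g\|\sigma_n(g)I - I\|_{HS}\to 0$, and hence $\sup_g\|\pi_n(g)-\pi(g)\| \le d^{1/2}\sup_g\|\pi_n(g)-\pi(g)\|_{HS}\to 0$, contradiction. If you want to repair your argument, you should dispense with the ultraproduct framing, identify exactly the finite-dimensional unitary representation (the conjugation action on $HS(d)$ twisted by $\pi$) to which $(T_Q)$ applies, and state and use $(T_Q)$ in its quantitative almost-invariant-vector form rather than a lifting form.
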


When a $\delta$-representation is $F(\delta)$-close to a unitary representation, it is natural to ask whether the latter is unique. This leads to the notion of rigid and strongly rigid unitary representations which is a topic we study in Section 3. Again, those notions seem to draw a line between amenable and non-amenable groups, in that if $\Gamma$ is amenable, every unitary representation is strongly rigid, \cite{Joh86}, while we prove that if $\Gamma$ contains a non-abelian free group, the regular representations of $\Gamma$ on $\ell^2(\Gamma)$ is not strongly rigid.

\section{Ulam stability: definitions and lemmas}
\setcounter{equation}{0}

Let $\Gamma$ be a group, $\cH$ a Hilbert space; we let $\| \; \|$ denote throughout the paper, the operator norm on the ring $B(\cH)$ of bounded operators on $\cH$. We endow the space ${\rm Map}(\Gamma, U(\cH))$ of all maps of $\Gamma$ into the unitary group $U(\cH)$ of $\cH$ with the uniform distance,
\begin{equation*}
\| \mu - \nu\| : = \sup\limits_{\gamma \in \Gamma} \|\mu(\gamma) - \nu(\gamma)\|, \;\; \mu, \nu \in {\rm Map}(\Gamma, U(\cH))\,.
\end{equation*}

\medskip\n
Given $\mu \in {\rm Map} (\Gamma, U(\cH))$, we introduce now two important invariants,
\begin{align*}
D(\mu) &: = \inf\{\|\mu - \nu\|: \; \nu \in \; {\rm Hom} (\Gamma, U(\cH))\}
\intertext{and}
{\rm def}(\mu)& : = \sup\limits_{x,y \in \Gamma}\; \|\mu(xy) - \mu(x) \, \mu(y)\|
\end{align*}

\n
where the latter is referred to as the defect of $\mu$; notice that ${\rm def}(\mu) \le 3 D(\mu)$. Now for $\delta \ge 0$ we define:
\begin{equation*}
F_\Gamma(\delta) : = \sup\{D(\mu) : \;{\rm def}(\mu) \le \delta, \; \mu(e) = Id\}
\end{equation*}

\medskip\n
where the supremum is taken over all Hilbert spaces $\cH$ and maps $\mu$: $\Gamma \rightarrow U(\cH)$ with $\mu(e) = Id$. If instead we take the supremum only over all finite dimensional or $n$-dimensional Hilbert spaces, we will denote by $F^{fd}_\Gamma$, respectively $F_\Gamma^{(n)}$ the resulting function.

\medskip
We clearly have for all $\delta \ge 0$:
\begin{equation*}
F_\Gamma(\delta) \ge F_\Gamma^{fd}(\delta) \ge F_\Gamma^{(n)}(\delta) 
\end{equation*}
and
\begin{equation*}
F_\Gamma(0) = F_\Gamma^{fd}(0) = F_\Gamma^{(n)}(0) = 0\,.
\end{equation*}

\medskip\n
Observe in addition that $F_\Gamma^{(1)}(\delta) \ge \frac{\delta}{2}$; this is obtained by taking $\gamma_0 \in \Gamma$, $\gamma_0 \not= e$ and defining $\mu: \Gamma \r S^1$ with the property that $|\mu(\gamma_2) - 1| = \frac{\delta}{2}$ and $\mu(\gamma) = 1 \; \forall \gamma \not= \gamma_0$.

\medskip
Taking into account that the functions $F_\Gamma$, $F_\Gamma^{fd}$ and $F_\Gamma^{(n)}$ are all monotone increasing, we may define:
\begin{equation*}
{\rm def}(\Gamma) : = \lim\limits_{\delta \rightarrow 0^+} \, F_\Gamma(\delta)
\end{equation*}

\n
and analogously, ${\rm def}^{fd}(\Gamma)$ and ${\rm def}^{(n)}(\Gamma)$.

\begin{definition}\label{def1.1}
The group $\Gamma$ is strong Ulam stable, resp. Ulam stable, if ${\rm def}(\Gamma) = 0$, resp. ${\rm def}^{fd}(\Gamma) = 0$. Equivalently, the function $F_\Gamma$, resp. $F_\Gamma^{fd}$ is continuous at $\delta = 0$.
\end{definition}

We examine now the behaviour of these invariants under natural operations. In the sequel we will denote by ${\rm Map}_0(\Gamma, U(\cH))$ the set of maps $\mu: \Gamma \rightarrow U(\cH)$ with $\mu(e) = Id$.

\begin{lemma}\label{lem1.2}
Let $\pi : \Gamma \r \Gamma_0$ be a surjective homomorphism and $\mu \in {\rm Map}_0 (\Gamma_0, U(\cH))$.

\begin{itemize}
\item[{\rm (1)}]  ${\rm def}(\mu) = {\rm def}(\mu \circ \pi)$  

\item[{\rm (2)}] $\min (D(\mu \circ \pi),  \sqrt{3}) = \min(D(\mu), \sqrt{3})$.  
\end{itemize}
\end{lemma}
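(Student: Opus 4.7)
Part (1) follows directly from surjectivity of $\pi$: for any pair $(x_0,y_0) \in \Gamma_0 \times \Gamma_0$ one can choose lifts $x,y \in \Gamma$ with $\pi(x)=x_0$ and $\pi(y)=y_0$, giving
\[
(\mu\circ\pi)(xy)-(\mu\circ\pi)(x)(\mu\circ\pi)(y)=\mu(x_0 y_0)-\mu(x_0)\mu(y_0),
\]
so the suprema defining ${\rm def}(\mu)$ and ${\rm def}(\mu\circ\pi)$ coincide.

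For part (2), the easy inequality $D(\mu\circ\pi)\le D(\mu)$ is immediate, since every $\nu_0 \in {\rm Hom}(\Gamma_0,U(\cH))$ pulls back to $\nu_0\circ\pi \in {\rm Hom}(\Gamma,U(\cH))$ with $\|\mu\circ\pi-\nu_0\circ\pi\|=\|\mu-\nu_0\|$ (again by surjectivity). The substantive direction is to prove $D(\mu)\le D(\mu\circ\pi)$ under the assumption $D(\mu\circ\pi)<\sqrt{3}$. My plan is to show that any homomorphism $\nu:\Gamma\to U(\cH)$ with $\|\mu\circ\pi-\nu\|<\sqrt{3}$ is automatically trivial on $N:=\ker\pi$, so that $\nu$ factors as $\nu=\nu_0\circ\pi$ for a unique $\nu_0\in{\rm Hom}(\Gamma_0,U(\cH))$ satisfying $\|\mu-\nu_0\|=\|\mu\circ\pi-\nu\|$. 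For $n\in N$ one has $(\mu\circ\pi)(n)=\mu(e)=Id$, so $\|\nu(n)-Id\|<\sqrt{3}$; as $\nu$ is a homomorphism and $n^k\in N$, the same bound holds for every power $\nu(n)^k$, $k\in\IZ$.

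I expect the main obstacle to be the following spectral fact, which is where $\sqrt{3}$ enters: if $U\in U(\cH)$ satisfies $\sup_{k\in\IZ}\|U^k-Id\|<\sqrt{3}$, then $U=Id$. By the spectral theorem $\|U^k-Id\|=\sup_{\lambda\in\sigma(U)}|\lambda^k-1|$, so it suffices to rule out any $\lambda_0=e^{i\theta_0}\in\sigma(U)\setminus\{1\}$. The closure of $\{\lambda_0^k\}_{k\in\IZ}$ in $S^1$ is either all of $S^1$ or a finite cyclic subgroup of some order $q\ge 2$, and in every such subgroup there is an element $\lambda$ with $|\lambda-1|\ge 2\sin(\pi/3)=\sqrt{3}$ (equality occurring exactly when $q=3$, the other cases, including irrational rotations, giving strictly larger values). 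This forces $\sup_k|\lambda_0^k-1|\ge\sqrt{3}$, a contradiction, so $\sigma(U)=\{1\}$ and $U=Id$.

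Applying this to each $U=\nu(n)$, $n\in N$, yields $\nu|_N\equiv Id$ and hence the desired factorisation. Taking the infimum over all $\nu$ with $\|\mu\circ\pi-\nu\|<\sqrt{3}$ gives $D(\mu)\le D(\mu\circ\pi)$ whenever $D(\mu\circ\pi)<\sqrt{3}$, and combined with the reverse inequality this produces equality below the threshold. If instead $D(\mu\circ\pi)\ge\sqrt{3}$, then $D(\mu)\ge\sqrt{3}$ already from the easy direction; in every case $\min(D(\mu),\sqrt{3})=\min(D(\mu\circ\pi),\sqrt{3})$.
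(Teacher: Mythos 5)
Your proof is correct and follows essentially the same route as the paper: reduce part (2) to showing that any homomorphism $\nu:\Gamma\to U(\cH)$ with $\|\mu\circ\pi-\nu\|<\sqrt{3}$ kills $\ker\pi$, via the spectral observation that $\sup_{k\in\IZ}\|U^k-Id\|<\sqrt{3}$ forces $U=Id$. You merely spell out the $\sqrt{3}$-threshold computation (the extremal case being the cyclic subgroup of order $3$, where $|e^{2\pi i/3}-1|=\sqrt{3}$), which the paper's proof leaves implicit.
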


\begin{proof}
The first assertion as well as the inequality $D(\mu \circ \pi) \le D(\mu)$ are immediate. To show (2) we may thus assume that $D(\mu \circ \pi) < \sqrt{3}$ and pick $\varepsilon > 0$ and $\nu \in {\rm Hom}(\Gamma, U(\cH))$ with
\begin{equation*}
\|\mu \circ \pi(g) - \nu(g) \| \le \sqrt{3} - \varepsilon, \quad \forall g \in \Gamma\,.
\end{equation*}

\medskip\n
In particular, $\|\nu(g)^n - Id\| \le \sqrt{3} - \varepsilon$ for every $g \in {\rm Ker} \, \pi$, $n \in \IZ$. Thus, if $g \in {\rm Ker} \, \pi$, then for every $z \in {\rm spec}(\nu(g))$ and $n \in \IZ$, $|z^n - 1| \le \sqrt{3} - \varepsilon$ which implies that $z = 1$ and hence $\nu = \nu_0 \circ \pi$, for some $\nu_0 \in {\rm Hom}(\Gamma_0, U(\cH))$. Thus $D(\mu \circ \pi) = D(\mu)$ which concludes the proof.
\end{proof}

Lemma \ref{lem1.2} implies then immediately,

\begin{corollary}\label{cor1.3}
Let $\pi : \Gamma \r \Gamma_0$ be a surjective homomorphism. Then:
\begin{equation*}
\min({\rm def}(\Gamma_0), \sqrt{3}) \le \min ({\rm def}(\Gamma), \sqrt{3})
\end{equation*}

\medskip\n
and the same inequality holds with ${\rm def}$ replaced by ${\rm def}^{fd}$ or ${\rm def}^{(n)}$. In particular if $\Gamma$ is (strong) Ulam stable then $\Gamma_0$ is (strong) Ulam stable.
\end{corollary}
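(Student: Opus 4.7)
The corollary is essentially a formal consequence of Lemma \ref{lem1.2}, but one should be careful about the order in which suprema are taken and about the truncation at $\sqrt{3}$.

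The plan is to fix $\delta \ge 0$ and prove first the quantitative inequality
\begin{equation*}
\min(F_{\Gamma_0}(\delta), \sqrt{3}) \le \min(F_\Gamma(\delta), \sqrt{3}),
\end{equation*}
together with its finite-dimensional analogues. For this, pick an arbitrary $\mu \in {\rm Map}_0(\Gamma_0, U(\cH))$ with ${\rm def}(\mu) \le \delta$. By part (1) of Lemma \ref{lem1.2}, the pulled-back map $\mu \circ \pi$ lies in ${\rm Map}_0(\Gamma, U(\cH))$ and still satisfies ${\rm def}(\mu \circ \pi) \le \delta$; hence by definition of $F_\Gamma$ we have $D(\mu \circ \pi) \le F_\Gamma(\delta)$. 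Part (2) of the same lemma then gives
\begin{equation*}
\min(D(\mu), \sqrt{3}) = \min(D(\mu \circ \pi), \sqrt{3}) \le \min(F_\Gamma(\delta), \sqrt{3}).
\end{equation*}
Since $\mu$ was arbitrary with ${\rm def}(\mu) \le \delta$ and $\mu(e) = {\rm Id}$, and since $\min(\cdot, \sqrt{3})$ commutes with suprema (the truncation is continuous and monotone), taking the supremum on the left yields the claimed inequality. The same argument applies verbatim for $F^{fd}_\Gamma$ and $F^{(n)}_\Gamma$, since $\mu$ and $\mu \circ \pi$ take values in the \emph{same} unitary group.

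Now let $\delta \to 0^+$. Because $\min(\cdot, \sqrt{3})$ is continuous and $F_\Gamma$, $F_{\Gamma_0}$ are monotone, the limits on each side are respectively $\min({\rm def}(\Gamma_0), \sqrt{3})$ and $\min({\rm def}(\Gamma), \sqrt{3})$, proving the first assertion; the $fd$ and $n$-dimensional versions are identical.

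The ``in particular'' statement is then immediate: if $\Gamma$ is (strong) Ulam stable, the right-hand side vanishes, hence $\min({\rm def}(\Gamma_0), \sqrt{3}) = 0$, forcing ${\rm def}(\Gamma_0) = 0$ (or its finite-dimensional analogue). I do not foresee a genuine obstacle here; the only slightly delicate point is justifying the interchange $\sup_\mu \min(D(\mu), \sqrt{3}) = \min(\sup_\mu D(\mu), \sqrt{3})$, which follows from a routine two-case argument according to whether $\sup_\mu D(\mu) \le \sqrt{3}$ or not.
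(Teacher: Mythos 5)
Your proof is correct and is exactly the careful unwinding of what the paper dismisses as an ``immediate'' consequence of Lemma \ref{lem1.2}: pull back a near-optimal $\mu$ via $\pi$, apply both parts of the lemma, and pass to suprema and then to the limit $\delta \to 0^+$. The only point requiring any thought is the interchange $\sup_\mu \min(D(\mu),\sqrt{3}) = \min(\sup_\mu D(\mu),\sqrt{3})$, which you correctly justify by the two-case monotonicity argument.
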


Now we turn to the problem of controlling the invariants introduced above for the operations of induction of maps, which we first have to define. Let $\Lambda < \Gamma$ be a subgroup, $\cR$ a set of representatives of the left $\Lambda$-cosets and:
\begin{align*}
r : &\;\; \Gamma \longrightarrow \cR
\\[1ex]
&\;\; \gamma \longmapsto \Lambda \gamma \cap \cR
\end{align*}
the corresponding retraction.

\medskip
Given a map $\mu: \Lambda \r U(\cH)$, we define the induced map $\ov{\mu}: \Gamma \r U(\ell^2(\cR, \cH))$ by 
\begin{equation*}
(\ov{\mu}(\gamma)  f) (x) : = \mu(x \gamma r(x\gamma)^{-1}) \,f(r(x\gamma)), \;\mbox{where} \; f \in \ell^2(\cR, \cH)\,.
\end{equation*}

\medskip\n
With these definitions and notations we have
\begin{lemma}\label{lem1.4} Let $\mu \in {\rm Map} (\Lambda, U(\cH))$ and $\ov{\mu} \in {\rm Map} \big(\Gamma, U(\ell^2(\cR, \cH))\big)$ the induced map,

\begin{itemize}
\item[{\rm (1)}] ${\rm def}(\ov{\mu})  = {\rm def}(\mu)$

\medskip
\item[{\rm (2)}] $\|\ov{\mu}_1 - \ov{\mu}_2\|  = \|\mu_1 - \mu_2\|$

\medskip
\item[{\rm (3)}] $D(\ov{\mu}) \le D(\mu)$.   
\end{itemize}
\end{lemma}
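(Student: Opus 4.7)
The plan is to reduce all three statements to the observation that, for each fixed pair $\gamma_1,\gamma_2\in\Gamma$, both the operator $\ov{\mu}(\gamma_1\gamma_2)-\ov{\mu}(\gamma_1)\ov{\mu}(\gamma_2)$ and, for two maps $\mu_1,\mu_2$, the operator $\ov{\mu}_1(\gamma)-\ov{\mu}_2(\gamma)$, are of ``block-permutation'' form on $\ell^2(\cR,\cH)$. Concretely, for $x\in\cR$ set $y=r(x\gamma_1)$ and $z=r(x\gamma_1\gamma_2)$. Since $\Lambda x\gamma_1\gamma_2=\Lambda y\gamma_2$ we have $r(y\gamma_2)=z$, so writing $\lambda_1=x\gamma_1 y^{-1}$ and $\lambda_2=y\gamma_2 z^{-1}$ one checks $\lambda_1\lambda_2=x\gamma_1\gamma_2 z^{-1}\in\Lambda$ and a direct computation gives
\begin{equation*}
\big((\ov{\mu}(\gamma_1\gamma_2)-\ov{\mu}(\gamma_1)\ov{\mu}(\gamma_2))f\big)(x)=\big(\mu(\lambda_1\lambda_2)-\mu(\lambda_1)\mu(\lambda_2)\big)f(z).
\end{equation*}
The same $z$ appears as a function of $x$ in a bijective way (right translation by $\gamma_1\gamma_2$ permutes the cosets), so after post- and pre-composing with these coordinate permutations the operator is block-diagonal, with blocks of norm at most ${\rm def}(\mu)$. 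The analogous formula for $\ov{\mu}_1(\gamma)-\ov{\mu}_2(\gamma)$ is even simpler, with blocks $\mu_1(\lambda_x)-\mu_2(\lambda_x)$ where $\lambda_x=x\gamma r(x\gamma)^{-1}$.

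From this description, the operator norm is exactly the supremum of the block norms, which gives the $\le$ direction in both (1) and (2): ${\rm def}(\ov{\mu})\le{\rm def}(\mu)$ and $\|\ov{\mu}_1-\ov{\mu}_2\|\le\|\mu_1-\mu_2\|$. For the reverse inequalities I will normalize $\cR$ so that $e\in\cR$, and then evaluate at $x=e$ with $\gamma_1=\lambda$, $\gamma_2=\lambda'\in\Lambda$ (resp.\ $\gamma=\lambda\in\Lambda$); one gets $y=z=e$, $\lambda_1=\lambda$, $\lambda_2=\lambda'$, and by testing on functions supported at $e\in\cR$ the blocks $\mu(\lambda\lambda')-\mu(\lambda)\mu(\lambda')$ and $\mu_1(\lambda)-\mu_2(\lambda)$ are realized as genuine compressions of $\ov{\mu}(\lambda\lambda')-\ov{\mu}(\lambda)\ov{\mu}(\lambda')$ and $\ov{\mu}_1(\lambda)-\ov{\mu}_2(\lambda)$. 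Taking suprema over $\lambda,\lambda'\in\Lambda$ gives (1) and (2).

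For (3), the key observation is that the formula defining $\ov{\mu}$ is precisely the formula for the classical induced representation: if $\nu\in{\rm Hom}(\Lambda,U(\cH))$, then $\ov{\nu}\in{\rm Hom}(\Gamma,U(\ell^2(\cR,\cH)))$. This is immediate from (1), since then $\mu(\lambda_1\lambda_2)-\mu(\lambda_1)\mu(\lambda_2)=0$ in every block; alternatively one verifies it by direct substitution using the cocycle identity $r(y\gamma_2)=r(x\gamma_1\gamma_2)$ established above. Given this, for any $\nu\in{\rm Hom}(\Lambda,U(\cH))$ and any $\varepsilon>0$ with $\|\mu-\nu\|\le D(\mu)+\varepsilon$, part (2) applied to the pair $(\mu,\nu)$ yields $\|\ov{\mu}-\ov{\nu}\|=\|\mu-\nu\|\le D(\mu)+\varepsilon$, and since $\ov{\nu}$ is a homomorphism this gives $D(\ov{\mu})\le D(\mu)+\varepsilon$; let $\varepsilon\to 0$.

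The only mildly delicate point is the bookkeeping underlying the cocycle identity $r(r(x\gamma_1)\gamma_2)=r(x\gamma_1\gamma_2)$ and the verification that $\lambda_1\lambda_2\in\Lambda$ equals $x\gamma_1\gamma_2 z^{-1}$; once this is written out cleanly, the rest of the argument is formal.
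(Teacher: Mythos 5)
Your proof is correct and follows essentially the same route as the paper: the same pointwise computation of the induced map's defect and difference in block form, the same test vectors supported at $e \in \cR$ to get the reverse inequalities, and the same reduction of (3) to (2) via the observation that inducing a homomorphism yields a homomorphism. Your explicit formulation of the cocycle identity $r(r(x\gamma_1)\gamma_2)=r(x\gamma_1\gamma_2)$ and the block-permutation structure is a clean way to organize what the paper states more tersely, but the content is identical.
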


\medskip
\begin{proof}
(1): For $\gamma, \eta \in \Gamma$ and $x \in \cR$ we compute,
\begin{equation*}
\begin{array}{l}
(\ov{\mu}(\gamma \eta) - \ov{\mu}(\gamma) \ov{\mu}(\eta)) \, f(x) = 
\\[1ex]
\big(\mu(x \gamma \eta r(x \gamma \eta)^{-1}) - \mu (x \gamma r(x \gamma)^{-1}) \, \mu(r(x\gamma) \,\eta r(x \gamma \eta)^{-1})\big)\, \big(f(r (x \gamma \eta))\big)
\end{array}
\end{equation*}

\medskip\n
which implies, $\|\ov{\mu} (\gamma \eta) - \ov{\mu} (\gamma) \,\ov{\mu}(\eta)\| \le {\rm def} (\mu)$ and hence ${\rm def}(\ov{\mu}) \le {\rm def}(\mu)$.

\medskip
Setting $f = \delta_e \cdot \xi$, $\xi \in \cH$, taking $\gamma, \eta \in \Lambda$ and evaluating the above equality at $x = e$ gives
\begin{equation*}
(\ov{\mu}(\gamma \eta) - \ov{\mu}(\gamma) \,\ov{\mu}(\eta)) \,f(e) = (\mu(\gamma \eta) - \mu(\gamma) \,\mu(\eta)) (\xi)
\end{equation*}

\medskip\n
from which ${\rm def}(\ov{\mu}) \ge {\rm def}(\mu)$ follows.

\bigskip\n
(2): For $\mu_1,\mu_2: \Lambda \r U(\cH)$, $\gamma \in \Gamma$, $f \in \ell^2(\cR, \cH)$ and $x \in \cR$ we get
\begin{equation*}
(\ov{\mu}_1(\gamma) - \ov{\mu}_2(\gamma))\,f(x) = \big(\mu_1(x \gamma r(x \gamma)^{-1}) - \mu_2(x \gamma r(x \gamma)^{-1})) \,(f (r (x \gamma))\big)
\end{equation*}

\n
which first implies that
\begin{equation*}
\|\ov{\mu}_1 - \ov{\mu}_2\| \le \| \mu_1 - \mu_2\|\,.
\end{equation*}
Then let $\ve > 0$ and $\lambda \in \Lambda$ with
\begin{equation}\label{1.6}
\|\mu_1(\lambda) - \mu_2(\lambda)\| \ge \|\mu_1 - \mu_2\| - \ve\,.
\end{equation}

\n
Applying the equality above to $\gamma = \lambda$, $f = \delta_e \cdot \xi$, $\xi \in \cH$, we get:
\begin{equation*}
(\ov{\mu}_1(\lambda) - \ov{\mu}_2(\lambda)) \,f(e) = (\mu_1(\lambda) - \mu_2(\lambda))(\xi)
\end{equation*}

\medskip\n
from which, taking into account that $\|f\| = \|\xi\|$, follows:
\begin{equation*}
\|\ov{\mu}_1 (\lambda) - \ov{\mu}_2(\lambda) \| \ge \| \mu_1(\lambda) - \mu_2(\lambda)\|
\end{equation*}

\medskip\n
which together with (\ref{1.6}) concludes the proof. The inequality (3) follows from (2).
\end{proof}

The next task is to obtain a lower bound of $D(\ov{\mu})$ in terms of $D(\mu)$. This requires an additional hypothesis.

\begin{proposition}\label{prop1.5}
Then $\Lambda < \Gamma$, $\mu : \Lambda \r U(\cH)$ a map and $\ov{\mu}$: $\Gamma \r U(\ell^2(\cR, \cH))$ the induced map. Assume that ${\rm dim} \, \cH < + \infty$. Then,
\begin{equation*}
D(\mu) \le 16 \; D(\ov{\mu})\,.
\end{equation*}
\end{proposition}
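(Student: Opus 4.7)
The plan is to exploit the canonical finite-dimensional subspace $K := \delta_e\cdot\cH \subset \ell^2(\cR,\cH)$, which is preserved by $\ov{\mu}(\lambda)$ for every $\lambda\in\Lambda$ and on which $\ov{\mu}(\lambda)$ acts as $\mu(\lambda)$. Given a homomorphism $\ov{\nu}$ close to $\ov{\mu}$, the idea is to locate a genuine $\ov{\nu}(\Lambda)$-invariant subspace $L$ close to $K$; restricting $\ov{\nu}|_\Lambda$ to $L$ and transporting back to $\cH$ will produce a homomorphism $\nu_0:\Lambda\to U(\cH)$ close to $\mu$.

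Fix $\varepsilon>0$ and pick $\ov{\nu}\in{\rm Hom}(\Gamma,U(\ell^2(\cR,\cH)))$ with $\|\ov{\mu}-\ov{\nu}\|\le\delta':=D(\ov{\mu})+\varepsilon$. Since $\ov{\mu}(\lambda)K=K$ gives $P_K^\perp\ov{\mu}(\lambda)P_K=0$, I would deduce $\|P_K^\perp\ov{\nu}(\lambda)P_K\|\le\delta'$, and applying the same argument to $\lambda^{-1}$ (using $\ov{\nu}(\lambda)^*=\ov{\nu}(\lambda^{-1})$) also $\|P_K\ov{\nu}(\lambda)P_K^\perp\|\le\delta'$. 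Consequently every rank-$d$ projection $\ov{\nu}(\lambda)P_K\ov{\nu}(\lambda)^{-1}$ lies within $2\delta'$ of $P_K$ in operator norm.

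The central step (and the main obstacle) is to replace this family of ``almost-invariant'' projections by a genuinely $\ov{\nu}(\Lambda)$-invariant one close to $P_K$. I would form
\[
C := \overline{\mathrm{conv}}^{w}\{\ov{\nu}(\lambda)P_K\ov{\nu}(\lambda)^{-1}:\lambda\in\Lambda\}
\]
inside the self-adjoint part of $B(\ell^2(\cR,\cH))$; this is a weakly compact convex set contained in the $2\delta'$-ball around $P_K$. The conjugation action of $\Lambda$ through $\ov{\nu}$ is affine and isometric in operator norm, so by the Ryll--Nardzewski fixed point theorem (no amenability of $\Lambda$ required, the isometric action being automatically distal) there is an $\ov{\nu}(\Lambda)$-fixed point $X\in C$. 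Then $X$ is self-adjoint with $0\le X\le I$, commutes with $\ov{\nu}(\Lambda)$, and satisfies $\|X-P_K\|\le 2\delta'$. For $\delta'$ small the spectrum of $X$ is contained in $[0,2\delta']\cup[1-2\delta',1]$, so the spectral projection $P_L:=\chi_{[1/2,1]}(X)$ also commutes with $\ov{\nu}(\Lambda)$, has rank equal to $d=\dim\cH$ by local constancy of spectral projections across the gap at $1/2$, and a Riesz--Dunford estimate on a contour around $1$ yields $\|P_L-P_K\|\le C_1\delta'$. The hypothesis $\dim\cH<\infty$ enters decisively here, both in placing the orbit of projections inside a norm-bounded (hence weakly compact) set and in guaranteeing that the spectral cut preserves rank.

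Finally, via the Halmos--Kato rotation formula I would choose a unitary $W\in U(\ell^2(\cR,\cH))$ with $WP_KW^*=P_L$ and $\|W-I\|\le C_2\delta'$, and define $\nu_0(\lambda):=W^*\ov{\nu}(\lambda)W|_K\in U(K)\cong U(\cH)$. The invariance of $L$ under $\ov{\nu}(\Lambda)$ ensures that $W^*\ov{\nu}(\lambda)W$ preserves $K$ and that $\nu_0$ is a genuine homomorphism $\Lambda\to U(\cH)$. The triangle inequality
\[
\|\mu(\lambda)-\nu_0(\lambda)\| \le \|\ov{\mu}(\lambda)-\ov{\nu}(\lambda)\| + \|\ov{\nu}(\lambda)-W^*\ov{\nu}(\lambda)W\| \le \delta' + 2\|W-I\|
\]
combined with careful tracking of $C_1$ and $C_2$ yields $\|\mu-\nu_0\|\le 16\,\delta'$; letting $\varepsilon\to 0$ gives $D(\mu)\le 16\,D(\ov{\mu})$ in the regime where $\delta'$ is small enough for the spectral-gap estimate, the inequality being trivial otherwise.
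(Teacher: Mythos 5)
Your overall architecture — find the almost-invariant finite-rank projection, pass to a genuinely $\ov{\nu}(\Lambda)$-invariant projection nearby, transport the representation back to $\cH$ — is the same as the paper's. The final step (spectral cut plus Halmos--Kato rotation) is a legitimate alternative to the paper's polar decomposition of $PQ$. However, there is a real gap in the middle, precisely where you locate a commuting positive operator $X$.

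You form $C=\overline{\mathrm{conv}}^{w}\{\ov{\nu}(\lambda)P_K\ov{\nu}(\lambda)^{-1}\}$ inside $B(\ell^2(\cR,\cH))$ and assert it is weakly compact because it is norm-bounded. That implication is false: $B(\cL)$ is not reflexive, and norm-bounded closed convex sets in it are in general \emph{not} weakly compact (the unit ball of $B(\ell^2)$ is the standard counterexample). If instead you mean the weak$^*$ (ultraweak) topology, the set is indeed compact, but then the Ryll--Nardzewski theorem does not apply in the form you want: the noncontracting hypothesis in the norm topology is only known to combine with \emph{weak} compactness in a Banach space, not with weak$^*$ compactness in a dual space. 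So, as written, the existence of the invariant element $X$ is not justified. The correct reason the argument can be made to work is exactly the one the paper uses: since $\dim\cH<\infty$, the projection $P_K$ has finite rank, hence lies in the Hilbert space $HS(\cL)$ of Hilbert--Schmidt operators, on which $\Lambda$ acts by conjugation as a \emph{unitary} representation. In $HS(\cL)$, bounded closed convex sets are weakly compact and the orbit lies in a ball of HS-radius $\sqrt{\dim\cH}$; one can then either invoke Ryll--Nardzewski there (where norm-isometry in a uniformly convex space does give distality) or, more simply, take the circumcenter of the HS-closure of the convex hull, which is the route of Lemma~\ref{lem1.6}. You do observe that $\dim\cH<\infty$ is decisive, but you attach the wrong reason to it (``norm-bounded hence weakly compact''): the finite dimension is used to put the orbit into $HS(\cL)$, not to make norm-bounded sets of $B(\cL)$ weakly compact. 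Once the fixed-point step is carried out in $HS(\cL)$, the rest of your proof is sound, though the claimed constant $16$ would still need to be checked against the specific estimates coming from the Riesz--Dunford bound and the rotation unitary.
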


We first need the following lemma which generalizes the well known fact that if a unitary representation has an almost invariant vector, then there is a nearby invariant vector.

\begin{lemma}\label{lem1.6}
Let $\nu$: $\Lambda \r U(\cL)$ be a unitary representation into a Hilbert space $\cL$ and assume that there is an orthogonal projection $P$ in $\cL$ such that

\begin{itemize}
\item[{\rm (1)}] $\|P - \nu(g) \,P \nu(g)^*\| < \delta \quad \forall g \in \Lambda$, 

\item[{\rm (2)}] the image of $P$ is finite dimensional.
\end{itemize}

\medskip\n
Then there exists an orthogonal projection $Q$ with

\begin{itemize}
\item[{\rm (1)}] $Q$ commutes with $\nu$,

\item[{\rm (2)}]  $\|P-Q\| \le 2 \delta$.
\end{itemize}
\end{lemma}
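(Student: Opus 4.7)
The plan is to build an exactly $\nu$-invariant self-adjoint operator $T_0$ at operator-norm distance at most $\delta$ from $P$, and then cut off $T_0$ via its spectrum to obtain $Q$. The finite-dimensionality of $\mathrm{Im}(P)$ is what makes this viable in the non-amenable setting: it places $P$ in the Hilbert--Schmidt class $HS(\cL)$ with $\|P\|_{HS}=\sqrt{d}$, so that the $\Lambda$-orbit of $P$ under conjugation is a bounded subset of a Hilbert space on which we can average. This is the direct analogue of the classical proof that an almost-invariant vector yields a nearby invariant vector by taking the point of minimum norm in the closed convex hull of its orbit.

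Concretely, set $\sigma_g(T):=\nu(g)T\nu(g)^*$; by cyclicity of the trace and unitarity of $\nu(g)$ this is a Hilbert--Schmidt-isometric linear action of $\Lambda$ on $HS(\cL)$. Let $K$ be the HS-closed convex hull of the orbit $\{\sigma_g(P):g\in\Lambda\}$. Then $K$ is a closed, bounded, convex, nonempty, $\Lambda$-invariant subset of the Hilbert space $HS(\cL)$. Because the operator norm is dominated by the HS-norm, every op-closed set is automatically HS-closed; hence the op-closed convex set $\{T:\|T-P\|\le\delta\}$ (which contains the orbit) contains all of $K$, and likewise $K$ consists of self-adjoint operators with $0\le T\le I$. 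Strict convexity of $\|\cdot\|_{HS}^2$ gives a unique minimizer $T_0\in K$, and the fact that each $\sigma_g$ is an HS-isometry sending $K$ to itself forces $\sigma_g(T_0)=T_0$. Thus $T_0$ commutes with every $\nu(g)$, is self-adjoint, satisfies $0\le T_0\le I$, and obeys $\|T_0-P\|\le\delta$.

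It remains to pass from $T_0$ to a projection. If $\delta\ge 1/2$ one simply takes $Q=0$, noting $\|P-Q\|\le 1\le 2\delta$; so assume $\delta<1/2$. Weyl's perturbation inequality applied to the self-adjoint pair $P,T_0$ with $\|T_0-P\|\le\delta$ gives $\mathrm{spec}(T_0)\subset[0,\delta]\cup[1-\delta,1]$, two disjoint intervals. Define $Q:=\chi_{[1/2,1]}(T_0)$: it is an orthogonal projection (with finite rank, since $T_0\in HS(\cL)$ has square-summable eigenvalues), it commutes with $T_0$ and hence with $\nu$, and functional calculus on each of the two spectral intervals yields $\|T_0-Q\|\le\delta$. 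The triangle inequality then gives $\|P-Q\|\le\|P-T_0\|+\|T_0-Q\|\le 2\delta$.

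I expect the only real obstacle to be the production of the $\nu$-invariant operator $T_0$ without any amenability hypothesis on $\Lambda$. The key move is that the finite-rank assumption on $P$ lets the ``closest point in a closed convex set'' construction inside the Hilbert space $HS(\cL)$ stand in for the invariant mean that amenability would otherwise supply. Once $T_0$ is in hand, the spectral-gap step and the norm estimates are entirely routine.
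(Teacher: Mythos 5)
Your proof is correct and follows essentially the same approach as the paper: place $P$ in $HS(\cL)$, extract a $\nu$-invariant $T_0$ from the closed convex hull of the orbit $\{\nu(g)P\nu(g)^*\}$ at operator-norm distance at most $\delta$ from $P$, observe the spectral gap, and take the spectral projection onto $[1/2,1]$. The only cosmetic difference is that you pick the minimal-HS-norm point of the convex hull while the paper takes its circumcenter (both are unique and preserved by the HS-isometries $\sigma_g$), and you explicitly dispose of the trivial case $\delta\ge 1/2$, which the paper leaves implicit.
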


\begin{proof}
The basic observation is that $P$ belongs to the Hilbert spaces $HS(\cL)$ of Hilbert-Schmidt operators of $\cL$ on which $\Lambda$ acts unitarily by conjugation. If now $C$ is the convex hull of $\{\nu (g) \,P \nu(g)^*: g \in \Lambda\}$ in $HS(\cL)$, and $Q_0 \in \ov{C} \,^{HS}$ is the circumcenter of the closure of $C$ in the Hilbert-Schmidt norm, then $Q_0$ commutes with $\nu(G)$. Moreover, since $C \subset \{T: 0 \le T \le  Id$, $\|T - P\| \le \delta\}$ the same inclusion holds for $\ov{C}^{HS}$ and hence $0 \le Q_0 \le Id$, $\| Q_0 - P\| \le \delta$.

\medskip
From this we deduce that the spectrum of $Q_0$ is contained in $[0,\delta] \cup [1-\delta,1]$ and hence this holds for the spectrum of the support projection $Q$ of $Q_0$ as well; thus $\|Q - Q_0\| \le \delta$ and thus $\|P-Q\| \le 2 \delta$.
\end{proof}

\bigskip\n
{\it Proof of Proposition \ref{prop1.5}}~: Let $\mu: \Lambda \r U(\cH)$ be a map and $\ov{\mu}: \Gamma \r U(\ell^2(\cR,\cH))$ the induced map. We assume $D(\ov{\mu}) < \delta$ for some $\delta > 0$ and let $\nu: \Gamma \r U(\ell^2(\cR,\cH))$ be a unitary representation with $\|\nu(\gamma) - \ov{\mu}(\gamma)\| < \delta$, $\forall \gamma \in \Gamma$. Let $P$ be the orthogonal projection of $\ell^2(\cR,\cH)$ onto $\ell^2(\{e\}, \cH) = \cH$; then $P$ commutes with $\ov{\mu}(\lambda)$, $\forall \lambda \in \Lambda$ and hence $\|P- \nu(\lambda) \,P\nu(\lambda)^*\| < 2 \delta$, $\forall \lambda \in \Lambda$. Now apply Lemma \ref{lem1.6} to get the orthogonal projection $Q$ commuting with $\nu(\Lambda)$ and satisfying $\|P-Q\| \le 4 \delta$.

\medskip
Let $PQ = V\,|PQ|$ be the polar decomposition of $PQ$. Since $Q \ge QPQ \ge (1-4 \delta)Q$ one has $V^* V= Q$ and $\| Q- V\| \le \| \,|PQ| - Q\| \le 4 \delta$. It follows that $\|P-V\| \le \|P-PQ\| + \|PQ-V\| \le 8 \delta$. We note that $V^*V= Q$ commutes with $\nu(\Lambda)$ and $VV^* = P$. Therefore $V\nu(\cdot) V^*$ is a unitary representation of $\Lambda$ on $\ell^2(\{e\},\cH) = \cH$ and
\begin{equation*}
\|V \nu(g) \,V^* - \mu(g) \| \le 16 \delta \quad \forall g \in \Lambda \,.
\end{equation*}
\hfill $\square$

Using Prop.~\ref{prop1.5} and Assertion (1) in Lemma \ref{lem1.4}, we deduce
\begin{corollary}\label{cor1.7}
Let $\Lambda < \Gamma$. Then for every $\delta \ge 0$, we have
\begin{equation*}
F^{f d}_\Lambda(\delta) \le 16 \,F_\Gamma(\delta)\,.
\end{equation*}

\n
In particular, ${\rm def}^{fd}(\Lambda) \le 16\, {\rm def}(\Gamma)$ and if $\Gamma$ is strong Ulam stable then $\Lambda$ is Ulam stable.
\end{corollary}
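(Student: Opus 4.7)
The plan is to chain together Lemma~\ref{lem1.4}(1) and Proposition~\ref{prop1.5}, which together reduce everything to a direct computation with the definitions. Since Proposition~\ref{prop1.5} already supplies the nontrivial inequality $D(\mu) \le 16\, D(\ov{\mu})$ when $\cH$ is finite-dimensional, there is essentially no obstacle; the main thing to check is that the supremum defining $F_\Gamma(\delta)$ can legitimately be applied to the induced map.

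Concretely, I would fix $\delta \ge 0$ and take an arbitrary finite-dimensional Hilbert space $\cH$ together with a map $\mu \in {\rm Map}_0(\Lambda, U(\cH))$ satisfying ${\rm def}(\mu) \le \delta$. Form the induced map $\ov{\mu} : \Gamma \r U(\ell^2(\cR, \cH))$. A short calculation (assuming $e \in \cR$, so that $r$ restricts to the identity on $\cR$) shows that $\ov{\mu}(e) = Id$, so $\ov{\mu} \in {\rm Map}_0(\Gamma, U(\ell^2(\cR,\cH)))$. By Lemma~\ref{lem1.4}(1), ${\rm def}(\ov{\mu}) = {\rm def}(\mu) \le \delta$. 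The definition of $F_\Gamma$ then gives
\begin{equation*}
D(\ov{\mu}) \le F_\Gamma(\delta).
\end{equation*}

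Now I would invoke Proposition~\ref{prop1.5}, whose finite-dimensionality hypothesis is precisely what our choice of $\cH$ guarantees, to obtain
\begin{equation*}
D(\mu) \le 16\, D(\ov{\mu}) \le 16\, F_\Gamma(\delta).
\end{equation*}
Taking the supremum over all finite-dimensional $\cH$ and all admissible $\mu$ yields $F_\Lambda^{fd}(\delta) \le 16\, F_\Gamma(\delta)$, which is the required inequality.

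For the remaining consequences, I would let $\delta \r 0^+$ to get ${\rm def}^{fd}(\Lambda) \le 16\, {\rm def}(\Gamma)$. In particular, if $\Gamma$ is strongly Ulam stable so that ${\rm def}(\Gamma) = 0$, then ${\rm def}^{fd}(\Lambda) = 0$, i.e.\ $F_\Lambda^{fd}$ is continuous at $0$, which by Definition~\ref{def1.1} is exactly Ulam stability of $\Lambda$. No serious obstacle appears anywhere; the whole content of the corollary has been pushed into Proposition~\ref{prop1.5}.
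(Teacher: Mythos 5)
Your proof is correct and is exactly the argument the paper intends: the paper simply states that Corollary~\ref{cor1.7} follows from Proposition~\ref{prop1.5} and Lemma~\ref{lem1.4}(1), and you have filled in the routine bookkeeping (checking $\ov{\mu}(e)=Id$ and passing to suprema) in the natural way.
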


\section{Fundamental examples of Ulam stable and non-Ulam stable groups}
\setcounter{equation}{2}

The following result, proved by D. Kazhdan, provides the only presently known examples of groups satisfying strong Ulam stability:

\begin{theorem}\label{theo2.1} {\rm (\cite{Kaz82})}

\medskip
Let $\Gamma$ be amenable. Then $\Gamma$ is strongly Ulam stable, in fact:
\begin{equation*}
\dis\frac{\delta}{2} \le F_\Gamma(\delta) \le \delta + 120 \delta^2, \quad \forall \delta \le \dis\frac{1}{10}\,.
\end{equation*}
\end{theorem}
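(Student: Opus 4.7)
The lower bound $\delta/2 \le F_\Gamma(\delta)$ is witnessed by the one-dimensional example already noted before Definition \ref{def1.1}, so the plan is entirely about the upper bound, for which I would run Dixmier--Kazhdan's classical averaging, boosted by a quadratic-convergence iteration. Fix a two-sided invariant mean $m$ on $\ell^\infty(\Gamma)$, provided by amenability. Given $\mu \in {\rm Map}_0(\Gamma, U(\cH))$ of defect $\le \delta$, define $A(\mu)(x) \in B(\cH)$ via the weak-operator integral
$$\langle A(\mu)(x)\xi, \eta\rangle := m_g\langle \mu(g)^*\mu(gx)\xi, \eta\rangle.$$
This is well defined because each integrand is bounded by $\|\xi\|\|\eta\|$, and $\|\mu(g)^*\mu(gx) - \mu(x)\| \le \delta$ (pointwise in $g$) yields $\|A(\mu)(x) - \mu(x)\| \le \delta$.

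The crux is showing that $A$ \emph{squares} the defect: ${\rm def}(A(\mu)) \le C\delta^2$ for an explicit constant $C$. Writing $\phi(a,b) := \mu(ab) - \mu(a)\mu(b)$, the expansion $\mu(gxy) = \mu(gx)\mu(y) + \phi(gx,y)$ gives
$$A(\mu)(xy) - A(\mu)(x)\mu(y) = m_g[\mu(g)^*\phi(gx,y)].$$
Right-invariance $g \mapsto gx^{-1}$ turns the right-hand side into $m_g[\mu(gx^{-1})^*\phi(g,y)]$; using unitarity and $\|\mu(x^{-1}) - \mu(x)^*\| \le \delta$ one has $\mu(gx^{-1})^* = \mu(x)\mu(g)^* + O(\delta)$, so that the right-hand side equals $\mu(x)(A(\mu)(y) - \mu(y)) + O(\delta^2)$. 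Subtracting the trivial identity $A(\mu)(x)A(\mu)(y) - A(\mu)(x)\mu(y) = A(\mu)(x)(A(\mu)(y) - \mu(y))$ collapses the linear contribution into $(\mu(x) - A(\mu)(x))(A(\mu)(y) - \mu(y))$, a product of two norm-$\delta$ factors, so that ${\rm def}(A(\mu)) \le C\delta^2$.

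To finish, iterate: set $\mu_0 := \mu$ and $\mu_{n+1} := A(\mu_n)$. Then $\delta_n := {\rm def}(\mu_n)$ satisfies $C\delta_{n+1} \le (C\delta_n)^2$, so $C\delta_n \le (C\delta)^{2^n}$ whenever $C\delta < 1$, which decays super-geometrically; by the first-order bound, $\|\mu_{n+1} - \mu_n\|_\infty \le \delta_n$, so $(\mu_n)$ is Cauchy in sup norm and converges to a map $\pi$ with ${\rm def}(\pi) = 0$. A sup-norm limit of contractions is a contraction, and a contraction-valued homomorphism satisfying $\pi(x)\pi(x^{-1}) = I$ is automatically unitary-valued. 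Summing $\|\pi - \mu\|_\infty \le \sum_{n\ge 0} \delta_n \le \delta + C\delta^2/(1 - (C\delta)^2)$ and tracking constants gives the claimed bound $\delta + 120\delta^2$ for $\delta \le 1/10$.

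The main obstacle is the quadratic-defect estimate in the second paragraph: the naive bound gives only $O(\delta)$, and one must combine bi-invariance of $m$ with the algebraic shape of the defect $\phi$ to see that the linear-in-$\delta$ contributions cancel. This is precisely where amenability enters — for non-amenable $\Gamma$ no such invariant averaging is available, and the failure is dramatic by Theorem \ref{theo0.2}.
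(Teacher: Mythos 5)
Your plan coincides with the paper's proof of Theorem \ref{theo2.2}: the averaging operator $A(\mu)(x)=m_g\bigl[\mu(g)^*\mu(gx)\bigr]$ is exactly $\pi'(h)=\int_\Gamma\pi(x)^*\pi(xh)\,dx$, and your cancellation of the linear-in-$\delta$ contribution is the same quadratic-improvement mechanism, which the paper packages as the exact identity
\begin{equation*}
\int_\Gamma\bigl(\pi(xg)-\pi(x)\pi(g)\bigr)^*\bigl(\pi(xh)-\pi(x)\pi(h)\bigr)\,dx
=\pi'(g^{-1}h)-\pi'(g)^*\pi'(h)+\bigl(\pi'(g)-\pi(g)\bigr)^*\bigl(\pi'(h)-\pi(h)\bigr)
\end{equation*}
whose left-hand side is manifestly of norm at most $\varepsilon^2$.

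There is, however, a gap in your iteration. Both the quadratic bound ${\rm def}(A(\mu))\le C\delta^2$ and the first-order bound $\|A(\mu)-\mu\|_\infty\le\delta$ are proved under the hypothesis that $\mu$ is unitary-valued: you use $\mu(g)^*\mu(g)=Id$ (so that $m_g[\mu(g)^*\phi(g,y)]=A(\mu)(y)-\mu(y)$, which requires $m_g[\mu(g)^*\mu(g)]=Id$) and $\mu(g)^{-1}=\mu(g)^*$ (in writing $\mu(gx^{-1})^*=\mu(x)\mu(g)^*+O(\delta)$). But $A(\mu)$ is a weak-operator average of unitaries and hence only a contraction; already $A(\mu)(e)=m_g[\mu(g)^*\mu(g)]$ equals $Id$ at the first step precisely because $\mu$ is unitary, and $A(\mu_1)(e)$ will in general drift away from $Id$. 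Iterating $\mu_{n+1}:=A(\mu_n)$ and quoting the same estimates is therefore not justified, and without further control the non-unitarity of $\mu_n$ accumulates and threatens your claimed decay $C\delta_n\le(C\delta)^{2^n}$. The paper repairs this by renormalizing at each step: since $Id-2\varepsilon^2\le\pi'(g)^*\pi'(g)\le Id$ for $g\in\Gamma$ (a consequence of $\pi'(g)^*=\pi'(g^{-1})$, $\pi'(e)=Id$, and the defect bound on $\pi'$), the unitary part $\pi_1(g):=\pi'(g)|\pi'(g)|^{-1}$ of the polar decomposition satisfies $\|\pi_1(g)-\pi'(g)\|\le 3\varepsilon^2$, a second-order correction, and the iteration is then run with the unitary-valued $\pi_1$. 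You should insert this polar-decomposition renormalization (or carefully re-derive your two key bounds for near-unitary contractions with $\mu_n(e)$ close to but not equal to $Id$) before summing the series and claiming $\delta+120\delta^2$.
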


\medskip\n
{\bf Question:} In the context of Theorem \ref{theo2.1}, does the limit $\lim\limits_{\delta \r 0^+} \;\frac{F_\Gamma(\delta)}{\delta}$ exist?

\medskip
For the convenience of the reader we include a short proof of Kazhdan's theorem in a slightly more general context; the basic idea is taken from \cite{Sh} Thm.~1.3:

\begin{theorem}\label{theo2.2}
Let $\Gamma$ be an amenable subgroup of $\Lambda, C \ge 1$, $0 < \varepsilon < (10C)^{-1}$, and $\pi: \Lambda \rightarrow B(\cH)$ be a map into the space $B(\cH)$ of bounded operators, such that for every $x \in \Gamma$ and $h \in \Lambda$, one has $\pi(e) = Id$, $\pi(x) \in U(\cH)$, $\|\pi(h)\| \le C$ and $\|\pi(xh) - \pi(x) \pi(h)\| \le \varepsilon$. Then, there is a map $\sigma$: $\Lambda \rightarrow B(\cH)$ such that for every $x \in \Gamma$ and $h \in \Lambda$, one has $\|\sigma(h) - \pi(h)\| \le \varepsilon + 120 C \varepsilon^2$, $\sigma(x) \in U(\cH)$, $\|\sigma(h)\| \le C$ and $\sigma (x h) = \sigma(x) \sigma(h)$.
\end{theorem}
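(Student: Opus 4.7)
The approach is to use Kazhdan's theorem to promote $\pi|_\Gamma$ to a genuine unitary representation of $\Gamma$, and then extend $\Gamma$-equivariantly to $\Lambda$ via an invariant mean. Since the hypothesis, restricted to $h \in \Gamma \subset \Lambda$, says that $\pi|_\Gamma : \Gamma \to U(\cH)$ is a unitary $\varepsilon$-representation of the amenable group $\Gamma$, Theorem~\ref{theo2.1} applied to $\pi|_\Gamma$ produces a homomorphism $\rho : \Gamma \to U(\cH)$ with $\|\rho(x) - \pi(x)\| \le \varepsilon + 120 \varepsilon^2$ for all $x \in \Gamma$.

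Fix a two-sided invariant mean $m$ on $\Gamma$; for any bounded $B(\cH)$-valued function $f$ on $\Gamma$, the weak-$*$ integral $\int_\Gamma f(x)\,dm(x) \in B(\cH)$, defined through matrix coefficients, satisfies $\|\int f\,dm\| \le \|f\|_\infty$. Set $\sigma(y) := \rho(y)$ for $y \in \Gamma$ and
\[
\sigma(h) := \int_\Gamma \rho(x)^{-1}\pi(xh)\,dm(x) \qquad (h \in \Lambda \setminus \Gamma).
\]
Unitarity of $\sigma|_\Gamma$ and the norm bound $\|\sigma(h)\| \le \|\pi(xh)\| \le C$ are immediate. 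For the $\Gamma$-equivariance $\sigma(xh) = \sigma(x)\sigma(h)$ with $x \in \Gamma$, $h \in \Lambda$: when $h \in \Gamma$ this is just $\rho(xh) = \rho(x)\rho(h)$, while when $h \in \Lambda \setminus \Gamma$ the change of variable $z \mapsto zx^{-1}$ (right-invariance of $m$) together with the homomorphism property of $\rho$ gives
\[
\sigma(xh) = \int \rho(zx^{-1})^{-1}\pi(zh)\,dm(z) = \rho(x)\int \rho(z)^{-1}\pi(zh)\,dm(z) = \sigma(x)\sigma(h).
\]

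The main obstacle is the sharp distance bound $\|\sigma - \pi\| \le \varepsilon + 120 C\varepsilon^2$. A naive triangle inequality on the integrand gives
\[
\|\rho(x)^{-1}\pi(xh) - \pi(h)\| \le \|\pi(xh) - \pi(x)\pi(h)\| + \|\pi(x) - \rho(x)\|\,\|\pi(h)\| \le (1+C)\varepsilon + 120C\varepsilon^2,
\]
which is off by an extra $C\varepsilon$ in the linear term. To remove this factor one must exploit the first-order cancellation powering Kazhdan's iteration in Theorem~\ref{theo2.1}: the correction $\pi(x) - \rho(x)$ is, up to $O(\varepsilon^2)$, itself the invariant mean $\int \pi(y)^{-1}[\pi(y)\pi(x) - \pi(yx)]\,dm(y)$ of defect terms, and substituting this representation of $\rho(x)$ into the integrand for $\sigma(h) - \pi(h)$ and invoking $\Gamma$-invariance of $m$ in the resulting double integral, the two first-order contributions cancel after a suitable index change, leaving only a genuinely quadratic remainder of size $O(C\varepsilon^2)$. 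Carrying out this bookkeeping with explicit constants, as in Shtern's argument \cite{Sh}, yields the stated $\varepsilon + 120C\varepsilon^2$.
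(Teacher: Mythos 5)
Your approach is a genuinely different route from the paper's: you want to first invoke Kazhdan's theorem as a black box on $\pi|_\Gamma$ to produce a unitary representation $\rho$ of $\Gamma$, and then extend to $\Lambda$ by averaging $\rho(x)^{-1}\pi(xh)$ over an invariant mean. The equivariance step is carried out correctly -- the change of variable $z\mapsto zx^{-1}$ and right-invariance of $m$ do give $\sigma(xh)=\sigma(x)\sigma(h)$, and the norm bound $\|\sigma(h)\|\le C$ is clear. The paper, by contrast, never separates $\Gamma$ from $\Lambda$: it averages $\pi(x)^*\pi(xh)$ over $x\in\Gamma$ for \emph{all} $h\in\Lambda$ in a single step, shows the defect of the resulting $\pi'$ is $O(\varepsilon^2)$, corrects to a unitary on $\Gamma$ by polar decomposition, and iterates. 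The approximate homomorphism on $\Gamma$ and the $\Gamma$-equivariant extension to $\Lambda$ improve \emph{simultaneously} at each step, so the accumulated correction on all of $\Lambda$ is $\varepsilon + O(C\varepsilon^2)$.

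This simultaneity is exactly what your decoupled approach loses, and it is where the gap lies. Once you commit to $\rho$ before averaging, the distance bound does not close. The naive estimate you write gives $(1+C)\varepsilon + O(C\varepsilon^2)$, and you correctly note this is off. But the first-order cancellation you gesture at does not rescue the sharp constant. If one substitutes $\rho(x)\approx \pi(x)+\int_\Gamma\pi(y)^*\bigl(\pi(yx)-\pi(y)\pi(x)\bigr)\,dm(y)+O(\varepsilon^2)$ into $\sigma(h)-\pi(h)=\int\bigl(\rho(x)^{-1}\pi(xh)-\pi(h)\bigr)\,dm(x)$ and expands, the two first-order contributions do not cancel: after a change of variable the double integral reorganizes to $\pi'(h)-\pi(h)$ plus a second term of the same order $\varepsilon$ coming from $\pi(yx)^*\bigl(\pi(y)\pi(xh)-\pi(yxh)\bigr)$, giving roughly $2\varepsilon+O(C\varepsilon^2)$ rather than $\varepsilon+120C\varepsilon^2$. (There is no sign that makes these add to zero; the invariant-mean change of variable merely identifies one contribution with another of the same size, not its negative.) To eliminate the extra $\varepsilon$ you would need $\rho$ to be the $\Gamma$-restriction of the full iterative limit on $\Lambda$ and $\sigma$ to be that very limit -- i.e.\ you would have to run the paper's iteration anyway. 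Finally, note that Theorem~\ref{theo2.2} is precisely the paper's \emph{proof} of Kazhdan's Theorem~\ref{theo2.1} (specialize $\Lambda=\Gamma$, $C=1$); using Theorem~\ref{theo2.1} as a black box inside a proof of Theorem~\ref{theo2.2} undercuts that purpose even where it is not formally circular. If all you wanted were qualitative Ulam stability with \emph{some} constant -- say $\|\sigma-\pi\|\le (2+C)\varepsilon$ -- your strategy would be fine, but it does not establish the bound as stated.
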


\begin{proof}
We fix an invariant mean on $\Gamma$ and write it as $\int_\Gamma dx$. Define $\pi' : \Lambda \rightarrow B(\cH)$ by
\begin{equation*}
\pi '(h) = \dis\int_\Gamma \pi(x)^* \pi(x h) dx\,.
\end{equation*}

\n
One has $\pi'(e) = Id$ and $\|\pi(h) - \pi'(h)\| \le \varepsilon$ for all $h \in \Lambda$. Moreover, $\pi'(g)^* = \pi'(g^{-1})$ for $g \in \Gamma$. Let $g \in \Gamma$ and $h \in \Lambda$. Then,
\begin{equation*}
\begin{array}{l}
\dis\int_\Gamma (\pi(xg) - \pi(x) \pi(g))^* (\pi(xh) - \pi(x) \pi(h))dx
\\[1ex]
= \dis\int_\Gamma \pi(xg)^* \pi(xh) dx - \pi' (g)^* \pi(h) - \pi(g)^* \pi'(h) + \pi(g)^* \pi(h)
\\[2ex]
= \pi' (g^{-1} h) - \pi' (g)^* \pi' (h) + (\pi'(g) - \pi(g))^* (\pi'(h) - \pi(h))\,.
\end{array}
\end{equation*}
It follows that
\begin{equation*}
\|\pi'(g^{-1} h) - \pi' (g^{-1}) \pi'(h)\| = \|\pi' (g^{-1}h) - \pi' (g)^* \pi' (h)\| \le 2 \varepsilon^2\,.
\end{equation*}

\n
Since $Id -2 \varepsilon^2 \le \pi'(g)^* \pi'(g) \le Id$, one has $Id -2 \varepsilon^2 \le |\pi'(g)| \le Id$. Thus, for the unitary element $\pi_1(g) : = \pi'(g)|\pi'(g)|^{-1}$, one has $\|\pi_1(g) - \pi'(g)\| \le 3 \varepsilon^2$ (since $\varepsilon < 0.1)$. We set $\pi_1(h) = \pi'(h)$ for $h \in \Lambda \backslash \Gamma$. It follows that for every $x \in \Gamma$ and $h \in \Lambda$, one has $\|\pi(h) - \pi_1(h)\| \le \varepsilon + 3 \varepsilon^2$, $\pi_1(e) = Id$, $\pi_1(x) \in U(x)$, $\|\pi_1 (h)\| \le C$ and
\begin{equation*}
\|\pi_1(xh) - \pi_1(x) \pi_1(h)\| \le \|\pi'(xh) - \pi'(x)\pi'(h)\| + ( 6 + 3 C)\ve^2\,.
\end{equation*}

\n
Now, replacing $\pi$ with $\pi_1$ and $\varepsilon$ with $(6 + 3 C) \,\varepsilon^2$; and repeat the process. The sequence $(\pi_n(h))$ converges to $\sigma(h)$ and
\begin{equation*}
\|\pi(h) - \sigma(h)\| \le \varepsilon + (\sigma + 3 C) + (\sigma + 3 C)^3 \, \varepsilon^4 + \dots \le \varepsilon + 120 C \varepsilon^2\,.
\end{equation*}
\end{proof}

\medskip
In the same article, D. Kazhdan shows that if $\Gamma = \pi_1(S)$ is the fundamental group of a compact surface of genus at least $2$, then for every $n \ge 2$ there is a map $\mu_n$: $\Gamma \r U(n)$ such that
\begin{equation*}
{\rm def}(\mu_n) \le \dis\frac{1}{n} \;\; \mbox{and} \;\; D(\mu_n) \ge \dis\frac{1}{10}\;.
\end{equation*}

\medskip\n
In fact for free non-abelian groups there is a recent construction due to P. Rolli (\cite{5}, Prop. 5.1) giving ``$\varepsilon$-representations'' in every dimension with additional properties. Let us use 
\begin{equation*}
\cB_\delta = \{T \in U(n): \;\|T- Id\| \le \delta\}
\end{equation*}

\n
as notation for the $\delta$-ball around $Id$ in $U(n)$. We recall here P. Rolli's construction: let $\IF_2$ be the free group on generators $a,b$ and choose maps
\begin{equation*}
\tau_a, \tau_b: \; \IZ \longrightarrow B_{\frac{\delta}{3}} 
\end{equation*}
with
\begin{equation*}
\tau_a (k^{-1}) = \tau_a(k)^{-1}, \quad \tau_b (k^{-1}) = \tau_b(k)^{-1}, \quad \forall k \in \IZ\,.
\end{equation*}

\medskip\n
Define $\mu$: $\IF_2 \r U(n)$ on every reduced word
\begin{equation*}
w = a^{n_1} \,b^{m_1} \dots a^{n_k} \,b^{m_k}
\end{equation*}

\medskip\n
by $\mu(w) = \tau_a(n_1) \, \tau_b(m_1)  \dots  \tau_a (n_k) \,\tau_b(m_k)$. With these definitions we have,\begin{equation*}
{\rm def}(\mu) \le \delta\,.
\end{equation*}

\n
Let us now assume that for every $k \ge 1$ the product $\tau_a(k) \tau_b(k)$ has infinite order in $U(n)$. Then we claim that $D(\mu) \ge 2 - \frac{\delta}{3}$. Indeed, take $\ve > \frac{\delta}{3}$ and assume that there is $\nu \in {\rm Hom}(\Gamma, U(n))$ with
\begin{equation*}
\sup\limits_\gamma \;\|\mu(\gamma) - \nu(\gamma)\| \le 2 - \ve\,.
\end{equation*}

\n
In particular, 
\begin{align*}
\| \tau_a(k) - \nu(a)^k\| & \le 2-\ve 
\intertext{and}
\|\tau_b (k) - \nu(b)^k\| & \le 2- \ve, \quad \forall k \in \IZ
\end{align*}
which implies that
\begin{align*}
& \|\nu(a)^k - Id \| \le 2-\ve + \dis\frac{\delta}{3}, \quad \forall k \in \IZ
\\[1ex]
& \|\nu(b)^k - Id \| \le 2- \ve + \dis\frac{\delta}{3}\,.
\end{align*}

\n
Thus there exists an integer $m$ such that $\nu(a)^m = \nu(b)^m = Id$. As a result, ${\rm Ker} \, \nu \supset \langle a^m,b^m\rangle$ and thus:
\begin{equation*}
\|\mu((a^m\,b^m)^k) - Id\| \le 2- \ve , \quad \forall k \ge 1
\end{equation*}
and thus by the definition of $\mu$:
\begin{equation*}
\|(\tau_a(m) \,\tau_b(m))^k - Id \| \le 2-\ve
\end{equation*}

\n
and since $\tau_a(m)\, \tau_b(m)$ has infinite order, this is a contradiction. By choosing in addition the set
\begin{equation*}
S = \{\tau_a(m) \,\tau_b(m) : m \ge 1\}
\end{equation*}

\medskip\n
in such a way that $\bigcup_{k \ge 1} S^k$ is dense in $U(n)$ we obtain:

\begin{proposition}\label{prop2.2}
For every $\delta > 0$ there exists a map $\mu \in \IF_2 \r U(n)$ such that
\begin{itemize}
\item[{\rm (1)}] ${\rm def} \, \mu \le \delta \;\;\mbox{and} \;\; D(\mu) \ge 2 - \mbox{\footnotesize $\dis\frac{\delta}{3}$}$, in particular ${\rm def}^{(n)} (\IF_2) = 2$,

\item[{\rm (2)}] $\mu$ has dense image. 
\end{itemize}
\end{proposition}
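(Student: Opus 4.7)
The construction in the preceding two pages already does the bulk of the work: for any maps $\tau_a,\tau_b\colon\IZ\to\cB_{\delta/3}$ satisfying (i) $\tau_\ast(-k)=\tau_\ast(k)^{-1}$ for $\ast\in\{a,b\}$ (which forces $\tau_\ast(0)=Id$ once $\delta<6$) and (ii) each product $\tau_a(k)\tau_b(k)$ of infinite order in $U(n)$ for $k\ge 1$, the associated map $\mu$ satisfies ${\rm def}(\mu)\le\delta$ and $D(\mu)\ge 2-\delta/3$, establishing (1). It therefore suffices to exhibit such $\tau_a,\tau_b$ with the additional property (iii) that $\bigcup_{k\ge 1}S^k$ is dense in $U(n)$, where $S=\{\tau_a(m)\tau_b(m):m\ge 1\}$. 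Property (iii) implies (2), because the image of the reduced word $a^{k_1}b^{k_1}\cdots a^{k_r}b^{k_r}$ under $\mu$ is exactly $\tau_a(k_1)\tau_b(k_1)\cdots\tau_a(k_r)\tau_b(k_r)\in S^r$, so $\mu(\IF_2)\supset\bigcup_{k\ge 1}S^k$.

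For the construction, pick two elements $V_1,V_2\in\cB_{\delta/3}$, both of infinite order, such that $\overline{\langle V_1,V_2\rangle}=U(n)$. Such pairs can be found inside $\cB_{\delta/3}\times\cB_{\delta/3}$ by a classical Baire-category argument (Auerbach--Kuranishi): the set of pairs topologically generating $U(n)$ is a dense $G_\delta$ subset of $U(n)\times U(n)$, and removing the meager set of pairs with either coordinate of finite order still leaves a residual subset meeting every nonempty open subset. Now set $\tau_b\equiv Id$, $\tau_a(1):=V_1$, $\tau_a(2):=V_2$, and $\tau_a(k):=V_1$ for $k\ge 3$; extend to negative integers via (i). Then $\tau_a(k)\tau_b(k)\in\{V_1,V_2\}$ for every $k\ge 1$, so (ii) holds, and $S=\{V_1,V_2\}$.

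To verify (iii), observe that $\overline{\bigcup_{k\ge 1}S^k}$ is a closed sub-semigroup of the compact group $U(n)$, hence automatically a subgroup: for any $h$ in it, compactness yields a subsequence $h^{k_j}\to Id$, so $h^{-1}=\lim h^{k_j-1}$ again lies in the closure. This subgroup contains $V_1$ and $V_2$, hence contains $\langle V_1,V_2\rangle$, and so equals $U(n)$. The only step with real content, and the expected main obstacle, is the existence of dense-generating pairs arbitrarily close to $Id$ in $U(n)$; this is classical but not entirely trivial, and one could alternatively replace the Baire argument by an explicit construction (e.g.\ a pair of small rotations about ``generic'' axes). Everything else is routine bookkeeping on top of what the preceding pages establish.
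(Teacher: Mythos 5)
Your proposal follows the paper's route exactly—Rolli's construction plus the choice of $\tau_a,\tau_b$ so that the semigroup generated by $S=\{\tau_a(m)\tau_b(m):m\ge1\}$ is dense—but it supplies the two ingredients the paper leaves implicit: the existence of infinite-order topological generators $V_1,V_2$ arbitrarily close to $Id$ (via a Baire-category argument), and the observation that a closed subsemigroup of a compact group is automatically a subgroup, so density of $\langle V_1,V_2\rangle$ already gives density of $\bigcup_{k\ge1}S^k$. This is a correct and somewhat more detailed version of the paper's own (largely asserted) argument rather than a genuinely different proof.
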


\n
 Analogues with values in $\IR$ of maps with small defect are quasimorphisms. Recall that a function $\phi$: $\Gamma \r \IR$ is a quasimorphism if $d \phi(\gamma,\eta): = \phi(\gamma \eta) - \phi(\gamma) - \phi(\eta)$ is bounded on $\Gamma \times \Gamma$. It is homogeneous if $\phi(\gamma^n) = n \phi(\gamma)$, $\forall \gamma \in \Gamma$, $\forall n \in \IZ$. It is well known that every quasimorphism is at bounded distance from a homogeneous one, and, that if $QH_h(\Gamma, \IR)$ denotes the vector space of homogeneous quasimorphisms, the quotient $QH_h(\Gamma, \IR) / {\rm Hom}(\Gamma, \IR)$ describes the kernel of the comparison map: $H^2_b(\Gamma, \IR) \r H^2(\Gamma, \IR)$. The relation of quasimorphisms to our problem at hand is given by the following:

\begin{lemma}\label{lem2.3}
Let $\Gamma$ be a group and $\phi: \Gamma \r \IR$ a homogeneous quasimorphism which is not a homomorphism. Let 
\begin{equation*}
\mu = \exp ( 2 \pi \, i \, \phi)\,.
\end{equation*}
Then:
\begin{equation*}
\dis\frac{3}{\pi} \; \arcsin\;\dis\frac{D(\mu)}{2} + \|d \phi\|_\infty \ge 1\,.
\end{equation*}
\end{lemma}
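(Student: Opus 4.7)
The plan is to argue by contradiction: assume $\frac{3}{\pi}\arcsin(D(\mu)/2) + \|d\phi\|_\infty < 1$, and deduce that $\phi$ must be a homomorphism. Set $c := \frac{1}{\pi}\arcsin(D(\mu)/2)$; the assumption gives $c < 1/3$ and $\|d\phi\|_\infty < 1 - 3c \le 1$. Fix $\ve > 0$ small enough that $c_\ve := \frac{1}{\pi}\arcsin((D(\mu)+\ve)/2)$ still satisfies $c_\ve < 1/3$, and pick, by definition of $D(\mu)$, a unitary representation $\nu : \Gamma \r U(\cH)$ with $\|\mu(\gamma)\,Id - \nu(\gamma)\| < D(\mu)+\ve$ for every $\gamma \in \Gamma$.

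The first key step uses homogeneity of $\phi$: since $\phi(\gamma^n) = n\phi(\gamma)$ we have $\mu(\gamma^n) = e^{2\pi i n \phi(\gamma)} = \mu(\gamma)^n$, so $\|\nu(\gamma)^n - \mu(\gamma)^n\,Id\| < D(\mu)+\ve$ for \emph{every} $n \in \IZ$. Applying the spectral theorem to the unitary $\nu(\gamma)$, each $e^{i\theta}$ in its spectrum satisfies $|e^{in\theta} - e^{2\pi i n \phi(\gamma)}| < D(\mu)+\ve$ for all $n$; equivalently, with $\alpha := \theta/(2\pi) - \phi(\gamma)$, one has $2|\sin(\pi n \alpha)| < D(\mu)+\ve$, i.e., $\mathrm{dist}(n\alpha, \IZ) < c_\ve$ for every $n \in \IZ$.

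The crux is then a short Diophantine fact: if $\mathrm{dist}(n\alpha,\IZ) < 1/3$ for every $n \in \IZ$, then $\alpha \in \IZ$. Irrational $\alpha$ is excluded because $\{n\alpha \bmod 1\}$ is dense in $[0,1)$; and for $\alpha = p/q$ in lowest terms the maximum of $\mathrm{dist}(np/q,\IZ)$ over $n$ equals $\lfloor q/2\rfloor/q$, which is $\ge 1/3$ for every $q \ge 2$, leaving only $q = 1$. Applied to each spectral value of $\nu(\gamma)$, this forces $e^{i\theta} = e^{2\pi i\phi(\gamma)}$, hence $\nu(\gamma) = \mu(\gamma)\,Id$ for every $\gamma \in \Gamma$.

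Consequently $\mu : \Gamma \r S^1$ is itself a group homomorphism, i.e., $e^{2\pi i\,d\phi(\gamma,\eta)} = 1$ for all $\gamma,\eta$, so $d\phi$ is $\IZ$-valued. Combined with $\|d\phi\|_\infty < 1$ this yields $d\phi \equiv 0$, contradicting the hypothesis that $\phi$ is not a homomorphism. I expect the sharp Diophantine threshold $1/3$ to be the one nontrivial ingredient -- it is precisely what produces the constant $3/\pi$ in the statement -- while the remaining points (in particular, passing to powers using homogeneity and extracting the pointwise spectral inequality in possibly infinite-dimensional $\cH$ via the continuous functional calculus for the normal operator $\nu(\gamma)$) are routine.
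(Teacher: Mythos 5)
Your argument is correct, but it takes a genuinely different route from the paper's. The paper chooses a lift $\varphi:\Gamma\r\IR$ of $\nu$ with $\|\phi-\varphi\|_\infty\le\tfrac12$, applies the geometric estimate $|e^{2\pi i x}-1|\le\delta\Rightarrow 2\pi|x|\le 2\arcsin(\delta/2)$ to obtain $\|\phi-\varphi\|_\infty\le\tfrac{1}{\pi}\arcsin(\delta/2)$, hence $\|d\varphi\|_\infty\le\tfrac{3}{\pi}\arcsin(\delta/2)+\|d\phi\|_\infty$; it then uses that $d\varphi$ is $\IZ$-valued (as $\nu\in\Hom(\Gamma,S^1)$) together with homogeneity of $\phi$ to rule out $d\varphi\equiv 0$ and conclude $\|d\varphi\|_\infty\ge 1$. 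You instead argue by contradiction and exploit homogeneity much earlier, to propagate the closeness estimate to all powers $\gamma^n$, and then run a sharp Diophantine threshold ($\mathrm{dist}(n\alpha,\IZ)<1/3$ for all $n\in\IZ$ forces $\alpha\in\IZ$) to force $\nu=\mu\cdot Id$ exactly; integrality then enters through $d\phi$ rather than through $d\varphi$. Both routes are valid, and your Diophantine step correctly explains where the constant $3$ comes from ($\arcsin(\sqrt3/2)=\pi/3$ matches the $1/3$ threshold).

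Two remarks. First, your proof actually establishes the slightly stronger inequality $\max\bigl(\tfrac{3}{\pi}\arcsin\tfrac{D(\mu)}{2},\,\|d\phi\|_\infty\bigr)\ge 1$: you use the two consequences of the assumed negation, namely $c<1/3$ and $\|d\phi\|_\infty<1$, in two separate and independent steps, so their conjunction (rather than the sum hypothesis) already produces the contradiction. Second, the excursion through the spectral theorem is unnecessary under the paper's definition of $D(\mu)$: there the competing $\nu$ is required to lie in $\Hom(\Gamma,U(\cH))$ for the \emph{same} $\cH$ on which $\mu$ acts, and since $\mu=\exp(2\pi i\phi)$ is scalar-valued this means $\cH=\IC$ and $\nu:\Gamma\r S^1$, so $\nu(\gamma)$ has a single spectral value and the functional-calculus step collapses. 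Your more general setup (comparing $\mu(\gamma)\,Id$ against $U(\cH)$-valued homomorphisms on an arbitrary Hilbert space) proves a stronger statement, and is harmless, but be aware that it is not what the displayed definition of $D(\mu)$ asks for.
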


\begin{proof}
Let $\ve > 0$ and $\nu \in {\rm Hom}(\Gamma, S^1)$ with 
\begin{equation*}
\sup\limits_\gamma \,|\mu(\gamma) - \nu(\gamma)| \le D(\mu) + \ve : = \delta\,.
\end{equation*}
Now write $\nu(\gamma) = \exp \; 2\pi \, i \, \varphi(\gamma)$ where $\varphi: \Gamma \r \IR$ satisfies $\| \phi - \varphi\|_\infty \le \frac{1}{2}$. We use the elementary geometric fact that
\begin{equation*}
\big| e^{2 \pi \, i \, x} - 1\big| \le \delta 
\end{equation*}
implies
\begin{equation*}
\dis\frac{2 \arcsin \; \dis\frac{\delta}{2}}{\delta} \ge \dis\frac{2 \pi \, x}{\big| e^{2 \pi \, i \, x}-1\big|} \ge 1 
\end{equation*}
to get,
\begin{align*}
2 \pi \, |\phi(\gamma) - \varphi(\gamma) | & \le \big| e^{2 \pi \, i(\phi (\gamma) - \varphi(\gamma))} - 1\big| \; \dis\frac{2 \arcsin \; \dis\frac{\delta}{2}}{\delta}
\\[2ex]
& \le 2 \arcsin \;\dis\frac{\delta}{2} , \quad \forall \gamma \in \Gamma
\end{align*}
from which follows:
\begin{equation*}
\|d \varphi\|_\infty \le \dis\frac{3}{\pi} \;\arcsin \; \dis\frac{\delta}{2} + \|d \phi \|_\infty\,.
\end{equation*}

\medskip\n
If now $\|d \varphi\|_\infty = 0$, then the homogeneous quasimorphism $\phi$, being at bounded distance from a homomorphism, would itself be a homomorphism. Thus $\|d \varphi\|_\infty > 0$; since $\nu$ is a homomorphism, $d \varphi$ takes values in $\IZ$ and thus $\|d \varphi\|_\infty \ge 1$.
\end{proof}

\begin{corollary}\label{cor2.4}
Assume that the comparison map $H^2_b(\Gamma, \IR) \r H^2(\Gamma, \IR)$ is not injective. Then $\Gamma$ is not Ulam stable, in fact
\begin{equation*}
{\rm def}^{(1)} (\Gamma) \ge \sqrt{3}\,.
\end{equation*}
\end{corollary}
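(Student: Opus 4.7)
The plan is to derive the lower bound ${\rm def}^{(1)}(\Gamma) \ge \sqrt{3}$ directly from Lemma~\ref{lem2.3} by rescaling a suitable quasimorphism. Since the comparison map $H^2_b(\Gamma,\IR) \to H^2(\Gamma,\IR)$ is not injective, one can fix a homogeneous quasimorphism $\phi \colon \Gamma \to \IR$ that is not a homomorphism, and set $c := \|d\phi\|_\infty$. Then $c$ is finite (as $\phi$ is a quasimorphism) and strictly positive (as $\phi$ is not a homomorphism).

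For each $t \in (0,1/c)$ I would introduce the $1$-dimensional map $\mu_t := \exp(2\pi i\, t\phi) \colon \Gamma \to U(1)$ and first estimate its defect. The factorisation
\begin{equation*}
\mu_t(\gamma\eta) - \mu_t(\gamma)\,\mu_t(\eta) = e^{2\pi i t(\phi(\gamma)+\phi(\eta))}\bigl(e^{2\pi i t\, d\phi(\gamma,\eta)} - 1\bigr)
\end{equation*}
together with $|e^{ix}-1|\le |x|$ for $x \in \IR$ gives ${\rm def}(\mu_t) \le 2\pi t c$, which tends to $0$ as $t\to 0^+$. Next I would apply Lemma~\ref{lem2.3} to the homogeneous quasimorphism $t\phi$ (which remains non-homomorphic for $t \ne 0$, so that the hypothesis of the lemma is met) to get
\begin{equation*}
\dis\frac{3}{\pi}\arcsin\dis\frac{D(\mu_t)}{2} + tc \ge 1.
\end{equation*}
For $t < 1/c$ this rearranges to $D(\mu_t) \ge 2\sin\bigl(\dis\frac{\pi}{3}(1-tc)\bigr)$, and the right-hand side converges to $2\sin(\pi/3) = \sqrt{3}$ as $t \to 0^+$.

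Combining the two estimates finishes the argument: given $\delta > 0$ and $\eta > 0$, I would pick $t$ so small that simultaneously ${\rm def}(\mu_t) \le \delta$ and $2\sin\bigl(\dis\frac{\pi}{3}(1-tc)\bigr) \ge \sqrt{3}-\eta$, whence $F_\Gamma^{(1)}(\delta) \ge D(\mu_t) \ge \sqrt{3}-\eta$. Letting $\eta \to 0^+$ gives $F_\Gamma^{(1)}(\delta) \ge \sqrt 3$ for every $\delta>0$, hence ${\rm def}^{(1)}(\Gamma) \ge \sqrt{3}$; since ${\rm def}^{fd}(\Gamma) \ge {\rm def}^{(1)}(\Gamma)$, this already precludes Ulam stability.

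There is essentially no real obstacle beyond bookkeeping: the whole argument is a one-parameter rescaling that drives both the defect ${\rm def}(\mu_t)$ and the penalty term $\|d(t\phi)\|_\infty = tc$ appearing in Lemma~\ref{lem2.3} to zero linearly in $t$, leaving the arcsine term forced up to $\pi/3$ in the limit. The only point to verify is that $t\phi$ is still a non-homomorphism quasimorphism for $t\ne 0$, which is immediate.
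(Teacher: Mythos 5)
Your proof is correct and follows essentially the same route as the paper's: rescale the quasimorphism by $t$, apply Lemma~\ref{lem2.3} to $t\phi$, and let $t \to 0^+$ so that the defect vanishes while the arcsine inequality forces $D(\mu_t)$ up to $\sqrt{3}$. The paper's version is simply more terse, leaving the defect estimate and the rearrangement of the arcsine inequality implicit; you have filled those in correctly.
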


\begin{proof}
Let $\phi$ be as in Lemma \ref{lem2.3} which we apply then to $t \cdot \phi$ and $\mu_t = \exp (2 \pi \, i \, t \, \phi)$, $t > 0$, to obtain:
\begin{equation*}
\dis\frac{3}{\pi} \; \arcsin \;\dis\frac{D(\mu_t)}{2} + t \cdot \| d \phi \|_\infty \ge 1\,,
\end{equation*}

\medskip\n
and letting $t \r 0$, we have ${\rm def} (\mu_t) \r 0$ while $\lim\limits_{t \r 0} \inf \,D(\mu_t) \ge \sqrt{3}$.
\end{proof}

\begin{remark}\label{rem2.5} \rm
The Corollary {\rm \ref{cor2.4}} applies to a large class of groups including non-elementary word hyperbolic groups and lattices in simple connected Lie groups of rank 1.
\end{remark}

Together with Corollary \ref{cor1.7} we deduce:
\begin{corollary}\label{cor2.6}
Assume that $\Gamma$ contains a subgroup $\Lambda$ such that:
\begin{equation*}
H^2_b (\Lambda, \IR) \r H^2(\Lambda, \IR)
\end{equation*}
is not injective. Then
\begin{equation*}
{\rm def}(\Gamma) \ge \dis\frac{\sqrt{3}}{16}
\end{equation*}

\n
in particular $\Gamma$ is not strong Ulam stable.
\end{corollary}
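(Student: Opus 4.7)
The statement is essentially a direct combination of Corollary \ref{cor2.4} (which supplies the obstruction for $\Lambda$) with Corollary \ref{cor1.7} (which transfers stability information from $\Gamma$ down to its subgroup $\Lambda$ up to a factor of $16$). So the plan is to chain these two results.

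First, I would apply Corollary \ref{cor2.4} to the subgroup $\Lambda$: since the comparison map $H^2_b(\Lambda,\IR)\to H^2(\Lambda,\IR)$ has nontrivial kernel, there is a homogeneous quasimorphism $\phi:\Lambda\to\IR$ which is not a homomorphism, and the one-parameter family $\mu_t=\exp(2\pi i\, t\,\phi)$ produced in the proof of that corollary yields one-dimensional (so in particular finite-dimensional) maps on $\Lambda$ with defect tending to $0$ but with $\liminf_{t\to 0^+} D(\mu_t)\ge\sqrt{3}$. Thus for every $\eta>0$ one can find $t>0$ so small that $\mu:=\mu_t\in\mathrm{Map}_0(\Lambda,U(1))$ satisfies $\mathrm{def}(\mu)<\eta$ and $D(\mu)\ge\sqrt{3}-\eta$.

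Second, I would induce this map up to $\Gamma$. By Lemma \ref{lem1.4}(1), the induced map $\ov{\mu}\in\mathrm{Map}_0(\Gamma,U(\ell^2(\cR,\IC)))$ still has defect $\mathrm{def}(\ov{\mu})=\mathrm{def}(\mu)<\eta$. Since $\cH=\IC$ is finite-dimensional, Proposition \ref{prop1.5} applies and gives $D(\mu)\le 16\,D(\ov{\mu})$. Combining,
\begin{equation*}
D(\ov{\mu})\;\ge\;\dis\frac{D(\mu)}{16}\;\ge\;\dis\frac{\sqrt{3}-\eta}{16}.
\end{equation*}
Therefore $F_\Gamma(\eta)\ge D(\ov{\mu})\ge (\sqrt{3}-\eta)/16$ for every sufficiently small $\eta>0$, and letting $\eta\to 0^+$ gives $\mathrm{def}(\Gamma)\ge\sqrt{3}/16$, which in particular precludes strong Ulam stability.

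I do not expect any real obstacle: both pieces — the quasimorphism-based one-dimensional obstruction for $\Lambda$ and the $16$-factor induction estimate — have already been done, so the proof is just a bookkeeping argument. The only thing worth being careful about is that the maps $\mu_t$ from Corollary \ref{cor2.4} are one-dimensional (hence finite-dimensional), which is exactly what is needed to invoke Proposition \ref{prop1.5}; in fact the entire argument only uses $F^{fd}_\Lambda\le 16\,F_\Gamma$, which is the content of Corollary \ref{cor1.7}.
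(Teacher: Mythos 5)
Your proof is correct and follows exactly the route the paper intends: apply Corollary~\ref{cor2.4} to $\Lambda$ to get ${\rm def}^{(1)}(\Lambda)\ge\sqrt{3}$, then use Corollary~\ref{cor1.7} (the $16$-factor induction bound, itself built on Lemma~\ref{lem1.4} and Proposition~\ref{prop1.5}) to transfer this lower bound to ${\rm def}(\Gamma)$. The paper's own proof is the one-line remark ``Together with Corollary~\ref{cor1.7} we deduce,'' so you have simply made the same chaining explicit.
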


\begin{corollary}\label{cor2.7}
Assume that $\Gamma$ contains a non-abelian free group. Then $\Gamma$ is not strong Ulam stable.
\end{corollary}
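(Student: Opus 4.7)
The plan is to deduce this immediately from Corollary~\ref{cor2.6}: taking $\Lambda \cong \IF_2$ to be a non-abelian free subgroup of $\Gamma$, it suffices to verify that the comparison map
\begin{equation*}
H^2_b(\IF_2, \IR) \longrightarrow H^2(\IF_2, \IR)
\end{equation*}
is not injective. Once this is established, Corollary~\ref{cor2.6} hands us the bound ${\rm def}(\Gamma) \ge \sqrt{3}/16 > 0$ and hence the failure of strong Ulam stability.

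Using the identification, recalled just above Lemma~\ref{lem2.3}, of the kernel of the comparison map with the quotient $QH_h(\IF_2, \IR)/{\rm Hom}(\IF_2, \IR)$, the task reduces to exhibiting on $\IF_2$ a homogeneous quasimorphism that is not a homomorphism. This is classical and can be done in a number of ways. The most elementary is to stay in the Rolli-type framework already used in this section: fix two odd functions $\phi_a, \phi_b : \IZ \to \IR$ with at least one of them non-linear, and define $\phi : \IF_2 \to \IR$ on every reduced word $w = a^{n_1}b^{m_1}\cdots a^{n_k}b^{m_k}$ by
\begin{equation*}
\phi(w) := \sum_{i=1}^{k} \bigl(\phi_a(n_i) + \phi_b(m_i)\bigr).
\end{equation*}
A direct inspection of the cancellations that occur when two reduced words are concatenated shows that $d\phi$ is uniformly bounded (only the finitely many syllables near the junction can be affected), so $\phi$ is a quasimorphism. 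Its homogenization $\ov{\phi}$ still satisfies $\ov{\phi}(a^n) = n \phi_a(1)$ with respect to the $a$-syllables (and analogously for $b$), but picks up genuinely non-linear contributions on words such as $(ab)^n$ whenever $\phi_a$ or $\phi_b$ is non-linear; in particular $\ov{\phi}$ cannot be a homomorphism. Alternatively one can invoke Brooks' counting quasimorphisms associated to any non-trivial reduced word.

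The only non-trivial point is the existence of such a quasimorphism, which I expect to be the main (and essentially only) step of the argument; everything else is a bookkeeping invocation of Corollary~\ref{cor2.6}. Once the quasimorphism $\ov{\phi}$ is in hand, the comparison map for $\IF_2$ is non-injective, and the corollary applied to $\Lambda = \IF_2 < \Gamma$ yields the conclusion.
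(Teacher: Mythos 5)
Your overall approach is exactly what the paper intends: Corollary~\ref{cor2.7} is stated as an immediate consequence of Corollary~\ref{cor2.6}, using the well-known fact that the comparison map $H^2_b(\IF_2,\IR)\to H^2(\IF_2,\IR)$ is not injective. The proposal is therefore on target, though two small imprecisions are worth flagging in your explicit verification. First, for the Rolli-style $\phi(w)=\sum_i\bigl(\phi_a(n_i)+\phi_b(m_i)\bigr)$ to be a quasimorphism you must require $\phi_a,\phi_b$ to be \emph{bounded} odd functions, not merely odd; with unbounded choices the defect $d\phi$ is not uniformly bounded even on a single cyclic subgroup $\langle a\rangle$. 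Second, ``non-linear'' is not the operative condition: with $\phi_a,\phi_b$ bounded, the homogenization $\ov\phi$ vanishes on $a$ and $b$ (since $\phi(a^n)=\phi_a(n)$ is bounded), so $\ov\phi$ fails to be a homomorphism as soon as it is nonzero somewhere, e.g. $\ov\phi(a^k b^l)=\phi_a(k)+\phi_b(l)$, and this is automatic once one of $\phi_a,\phi_b$ is a nonzero bounded odd function. A cleaner shortcut, which avoids exhibiting any quasimorphism at all, is to note that $\IF_2$ has cohomological dimension one, so $H^2(\IF_2,\IR)=0$, while $H^2_b(\IF_2,\IR)\ne 0$ (indeed it is infinite dimensional); thus the comparison map has nontrivial kernel. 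Either way, the reduction to Corollary~\ref{cor2.6} with $\Lambda=\IF_2$ is correct and gives $\mathrm{def}(\Gamma)\ge\sqrt{3}/16>0$.
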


\section{Deformation rigidity}
\setcounter{equation}{0}

If $\mu : \Gamma \rightarrow U(\cH)$ is a map with $D(\mu) < \varepsilon$, it is a natural question whether the representation $\omega: \Gamma \rightarrow U(\cH)$ with $\| \mu - \omega\| < \varepsilon$ is unique, provided that $\varepsilon$ is small. This question leads to the notion of rigid and strongly rigid representation which we introduce and study in this section.

\begin{definition}\label{def3.1}  ~

\begin{itemize}
\item[{\rm (1)}] $\pi \in {\rm Hom}(\Gamma, U(\cH))$ is rigid if there is $\ve > 0$, s.t.~whenever $\|\pi - \omega\| \le \ve$, where $\omega \in {\rm Hom}(\Gamma, U(\cH))$, then $\pi$ and $\omega$ are equivalent.

\item[{\rm (2)}] $\pi \in {\rm Hom}(\Gamma,U(\cH))$ is strongly rigid if $\pi$ is rigid and the orbit map
\begin{align*}
U(\cH)/ Z(\pi)  &\longrightarrow\; {\rm Hom}(\Gamma, U(\cH))\,,
\\[1ex]
U & \longmapsto \; U \,\pi \,U^{-1}
\end{align*}

\n
induces a homeomorphism onto its image. Here
\begin{equation*}
\cZ(\pi) = \{u \in U(\cH): u \pi(\gamma) u^{-1} = \pi(\gamma), \quad \forall \gamma \in \Gamma\}\,.
\end{equation*}
\end{itemize}
\end{definition}

\begin{theorem}\label{theo3.2} {\rm (\cite{Joh86})}

\medskip
If $\Gamma$ is amenable then every unitary representation is strongly rigid. 
\end{theorem}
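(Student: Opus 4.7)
The plan is to implement a standard averaging argument against an invariant mean. Fix a left-invariant mean $m$ on $\Gamma$, written as $\int_\Gamma(\cdot)\, dm(\gamma)$. Given $\omega \in {\rm Hom}(\Gamma, U(\cH))$ with $\varepsilon := \|\pi - \omega\| < 1$, I would define $T \in B(\cH)$ by the weak integral
\[
\langle T\xi, \eta\rangle \;=\; \int_\Gamma \langle \pi(\gamma)\omega(\gamma)^{-1}\xi,\, \eta\rangle\, dm(\gamma),\qquad \xi,\eta \in \cH.
\]
Since each $\pi(\gamma)\omega(\gamma)^{-1}$ is unitary, the sesquilinear form is bounded, so $T$ is a well-defined element of $B(\cH)$ with $\|T\|\le 1$. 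Applying the same construction to $\pi(\gamma)\omega(\gamma)^{-1} - Id$ yields $\|T - Id\| \le \varepsilon < 1$, so $T$ is invertible.

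Left-invariance of $m$, exploited through the substitution $\gamma \mapsto \eta^{-1}\gamma$ inside the integral, gives the intertwining identity $\pi(\eta)\, T = T\,\omega(\eta)$ for every $\eta \in \Gamma$. Taking the polar decomposition $T = U|T|$, invertibility of $T$ forces $U$ to be unitary. The intertwining identity implies $T^*T\,\omega(\eta) = \omega(\eta)\, T^*T$, so $|T|$ and $|T|^{-1}$ both commute with $\omega(\Gamma)$, whence
\[
\pi(\eta)\, U \;=\; \pi(\eta)\, T\, |T|^{-1} \;=\; T\,\omega(\eta)\,|T|^{-1} \;=\; T\,|T|^{-1}\,\omega(\eta) \;=\; U\,\omega(\eta).
\]
Thus $U$ implements a unitary equivalence $\pi \sim \omega$, which is precisely rigidity.

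For strong rigidity, well-definedness, injectivity and continuity of the orbit map $UZ(\pi) \mapsto U\pi U^{-1}$ are formal; what is at stake is continuity of its inverse on the image. Here I would use a quantitative version of the above: from $\|T-Id\|\le\varepsilon$ and $\|T\|\le 1$ one has $\|T^*T-Id\|\le 2\varepsilon$, so the spectrum of $|T|$ lies in $[\sqrt{1-2\varepsilon},\sqrt{1+2\varepsilon}]$ and a functional-calculus estimate gives $\||T|^{-1}-Id\|=O(\varepsilon)$. Then
\[
\|U-Id\| \;\le\; \|T(|T|^{-1}-Id)\| + \|T-Id\| \;=\; O(\varepsilon).
\]
This yields continuity of the inverse orbit map at $\pi$, and conjugating the entire construction by $V$ handles continuity at an arbitrary point $V\pi V^{-1}$ in the image.

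The main obstacle is precisely this quantitative control. The averaging step easily produces \emph{some} unitary intertwiner between $\pi$ and $\omega$, but strong rigidity additionally demands that the intertwiner be chosen close to $Id$ in operator norm (modulo the commutant) whenever $\omega$ is close to $\pi$. Fortunately the contractive bound $\|T-Id\|\le\|\pi-\omega\|$ is built into the averaging construction and feeds directly into the polar-decomposition estimate, so nothing beyond elementary spectral calculus is required.
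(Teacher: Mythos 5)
Your argument is correct and complete; let me compare it to what the paper does. The paper never proves Theorem~\ref{theo3.2} directly---it simply cites \cite{Joh86}---and then proves the strictly stronger Theorem~\ref{theo3.3} (unitarisable $\Rightarrow$ strongly rigid), of which Theorem~\ref{theo3.2} is a corollary via Dixmier's theorem that amenable groups are unitarisable. That proof is a \emph{derivation} argument: from a putative sequence of uniformly $1/n$-close but non-conjugate (within $\varepsilon$ of the identity) pairs $\mu_n,\nu_n$ one builds a uniformly bounded derivation $D = \bigoplus_n [nE_{1,2}, \pi_n(\cdot)]$, uses unitarisability to make it inner, $D(\cdot)=[T,\pi(\cdot)]$, and then reads off nearby intertwiners from the entries of $T$, yielding a contradiction. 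Your proof instead averages $\pi(\gamma)\omega(\gamma)^{-1}$ against an invariant mean to produce an intertwiner $T$ directly, then polar-decomposes; this is essentially Johnson's original argument and is much closer in spirit to the paper's proof of Kazhdan's Theorem~\ref{theo2.2}, which likewise hinges on an invariant mean. What you gain is an explicit, quantitative bound $\|U-\mathrm{Id}\| = O(\varepsilon)$ straight from the construction, rather than a contradiction argument. What the paper's route buys is generality: the derivation/cohomological mechanism works for all unitarisable groups, whereas your averaging is genuinely tied to amenability. Both are valid; yours is the more elementary and self-contained proof of the statement as literally worded. (One small point of care, which you implicitly get right: the intertwining should be checked in the weak formulation, i.e.\ $\langle \pi(\eta)T\omega(\eta)^{-1}\xi,\zeta\rangle = \langle T\xi,\zeta\rangle$ via $\gamma\mapsto\eta^{-1}\gamma$, since $T$ is only defined as a weak integral; and the passage to continuity at an arbitrary orbit point $V\pi V^{-1}$ uses that left translation by $V$ on $U(\cH)/Z(\pi)$ is a homeomorphism intertwining the orbit map with conjugation by $V$ on the target.)
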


Possibly more general, this result holds for unitarisable groups. See the work of G.\ Pisier \cite{Pisier}, for a general reference about unitarisable groups and related topics.

\begin{theorem}\label{theo3.3}
Assume that $\Gamma$ is unitarisable, that is, every uniformly bounded re\-presentation of $\Gamma$ on a Hilbert space is equivalent to a unitary one. Then every unitary representation is strongly rigid.
\end{theorem}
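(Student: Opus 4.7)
The strategy is to reduce Theorem~\ref{theo3.3} to an invariant-splitting problem via an extension representation, and to extract a unitary intertwiner from that splitting. Given $\omega \in \Hom(\Gamma, U(\cH))$ with $\|\pi - \omega\|$ small, I form the block homomorphism
$$\rho(\gamma) := \begin{pmatrix} \pi(\gamma) & \omega(\gamma) - \pi(\gamma) \\ 0 & \omega(\gamma) \end{pmatrix} \in GL(\cH \oplus \cH).$$
A direct computation shows $\rho$ is a homomorphism, and the uniform bound $\|\rho(\gamma)\| \le 1 + \|\pi - \omega\|$ makes $\rho$ uniformly bounded. By unitarisability of $\Gamma$ there exists an invertible $S$ with $S\rho S^{-1}$ unitary.

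The subspace $\cH \oplus \{0\}$ is $\rho$-invariant with restriction equal to $\pi$, so $S(\cH \oplus \{0\})$ is invariant for the unitary representation $S\rho S^{-1}$ and admits an invariant orthogonal complement. Pulling the corresponding orthogonal projection back through $S$ yields an idempotent $E$ that commutes with $\rho$ and has range $\cH \oplus \{0\}$; such an $E$ necessarily has block form $\bigl(\begin{smallmatrix} I & T \\ 0 & 0 \end{smallmatrix}\bigr)$ for some $T \in B(\cH)$. Unpacking $E\rho = \rho E$ gives $\pi(\gamma)(I + T) = (I + T)\omega(\gamma)$ for all $\gamma$, so $U := I + T$ is a bounded intertwining operator. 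Applying the symmetric construction to the flipped extension with $\omega$ in the top-left corner produces $U' = I + T'$ satisfying $\omega U' = U'\pi$, and then $UU' \in \cZ(\pi)$ while $U'U \in \cZ(\omega)$. Polar decomposing $U = V|U|$ and using that $U^*U$ commutes with $\omega$ extracts a partial isometry $V$ intertwining $\omega|_{(\ker U)^\perp}$ with $\pi|_{\overline{\mathrm{im}\,U}}$; combining with $U'$ promotes $V$ to a genuine unitary equivalence $\pi \sim \omega$, which establishes rigidity.

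For the homeomorphism part of strong rigidity, the same construction is applied to a sequence $\omega_n \to \pi$, and one argues that the resulting idempotents $E_n$ can be chosen to converge to $\mathrm{diag}(I, 0)$, so that the unitary intertwiners $U_n$ produced converge to the identity modulo $\cZ(\pi)$. This supplies the continuity of the inverse of the orbit map $U(\cH)/\cZ(\pi) \to \Hom(\Gamma, U(\cH))$.

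The main obstacle is to show that the intertwiner $U = I + T$ is actually a unitary bijection, rather than a partial intertwiner with possibly nontrivial kernel or cokernel. Unitarisability of $\Gamma$ is a qualitative hypothesis, giving no a priori bound on $\|S\|\,\|S^{-1}\|$ or on $\|T\|$, so invertibility has to be deduced a posteriori from the interplay of the two symmetric intertwiners $U, U'$ and the composites lying in the commutants $\cZ(\pi)$ and $\cZ(\omega)$. A clean way to extract the quantitative information — namely that $S$ can be chosen close to the identity when $\rho$ is uniformly close to the unitary $\pi \oplus \omega$ — is the key technical step, and it is where the full strength of the unitarisability hypothesis is used.
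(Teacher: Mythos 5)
You have correctly identified the gap in your own argument, and it is a genuine one: the invariant complement to $\cH \oplus 0$ inside your extension $\rho$ is far from unique, and unitarisability gives no control over which complement is produced, so $U = I + T$ has no reason to be invertible. In fact the diagonal $\Delta = \{(\eta,\eta) : \eta \in \cH\}$ is \emph{always} a bounded $\rho$-invariant complement of $\cH \oplus 0$ (one checks directly that $\rho(\gamma)(\eta,\eta) = (\omega(\gamma)\eta, \omega(\gamma)\eta)$), and the corresponding idempotent has $T = -I$, so $U = 0$. Your argument produces \emph{some} bounded splitting $E$ with no bound on $\|T\|$; the commutant and polar-decomposition manipulations with $U$ and $U'$ cannot manufacture invertibility that is absent, and the statement that ``the $E_n$ can be chosen to converge to $\mathrm{diag}(I,0)$'' is precisely the missing ingredient, not an available choice. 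Moreover proving that a \emph{good} splitting exists would already presuppose $\pi \sim \omega$ via a near-identity unitary, which is what we want to show, so there is a circularity lurking as well.

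The paper closes exactly this gap with a rescaling-and-aggregation device. Arguing by contradiction, it takes pairs $\mu_n, \nu_n$ with $\|\mu_n - \nu_n\| \le 1/n$ that are not conjugated by any near-identity unitary, and puts the difference \emph{amplified by} $n$ into the off-diagonal block: $D_n(g) = [nE_{1,2}, \mu_n(g)\oplus\nu_n(g)]$, so that $\sup_g\|D_n(g)\| \le 1$ uniformly in $n$. It then forms the single direct sum $\pi = \bigoplus_n(\mu_n\oplus\nu_n)$ with $D = \bigoplus_n D_n$ and applies unitarisability once, to the extension $\pi_D$. This yields one bounded $T$ with $D = [T,\pi]$, and the boundedness of $T$ now does real work: the $(n,n)$ block $T^{1,2}_{nn}$ has norm at most $\|T\|$, so after \emph{dividing back by $n$} the intertwiner $1 - n^{-1}T^{1,2}_{nn}$ of $\nu_n$ with $\mu_n$ is within $n^{-1}\|T\|$ of the identity, hence invertible for large $n$, with unitary part $u_n$ satisfying $u_n\nu_n = \mu_n u_n$ and $\|1 - u_n\| \to 0$ --- the contradiction. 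The amplification by $n$ before unitarisation and division by $n$ afterwards is the step that converts the qualitative hypothesis into the quantitative control on the intertwiner that your single-pair construction cannot provide.
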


\begin{proof}
Assume that $\Gamma$ is not deformation rigid. Thus, for every $n$, there exist uniformly $1/n$-close unitary representations $\mu_n$ and $\nu_n$ of $\Gamma$ on a Hilbert space $\cH_n$ which are not conjugated by any unitary element $u$ such that $\|1 - u\| \le \ve$. We consider the representation $\pi_n = \mu_n \oplus \nu_n$ of $\Gamma$ on $\cH_n \oplus \cH_n$, and the derivation
\begin{equation*}
D_n(g) = [nE_{1,2}, \pi_n(g)] = \begin{pmatrix}
0 & n(\nu_n(g) - \mu_n(g)) 
\\ 
0 & 0 
\end{pmatrix} \in B(\cH_n \oplus \cH_n)\,.
\end{equation*}

\medskip\n
It is clear that $D_n$ is a derivation, $D_n(gh) = D_n(g) \,\pi_n(h) + \pi_n(g) D_n(h)$, such that $\sup_g \|D_n(g)\| \le 1$. Now, let $\pi = \bigoplus_n \,\pi_n$, $\cH = \bigoplus_n(\cH_n \oplus \cH_n)$ and $D = \bigoplus D_n$. Since $\Gamma$ is unitarisable, the uniformly bounded representation
\begin{equation*}
\pi_D(g) = \begin{pmatrix}
\pi(g) & D(g)
\\ 
0 & \pi(g)
\end{pmatrix} \in B(\cH \oplus \cH)
\end{equation*}

\n
is unitarisable, which implies that $D$ is inner. (For more details see \cite{MO}.) Hence, there exists $T \in B(\cH)$ such that $D(g) = [T, \pi(g)]$. Then, one has
\begin{equation*}
D_n(g) = [T_{nn}, \pi_n(g)] = T_{nn}^{1,2} \,\nu_n(g) - \mu_n(g) \,T_{nn}^{1,2}\,,
\end{equation*}

\n
where $T_{nn} \in B(\cH_n \oplus \cH_n)$ is the $(n,n)$-entry of $T \in B(\cH)$ and $T_{nn}^{1,2} \in B(\cH_n)$ is the $(1,2)$-entry of $T_{nn}$. It follows that $(n-T_{nn}^{1,2}) \nu_n(g) = \mu_n(g)$ $(n - T_{nn}^{1,2})$ for all $g$. But since $\sup_n \|T^{1,2}_{nn} \| \le \|T\|$, the operators $1-n^{-1} \,T^{1,2}_{nn}$ are invertible for large $n$, their unitary parts $u_n$ of the polar decomposition satisfy $u_n \nu_n(g) = \mu_n(g)u_n$ and $\|1-u_n\| \rightarrow 0$. This is a contradiction and finishes the proof.
\end{proof}

The rest of the section is devoted to the construction of examples $(\Gamma, \pi)$ where $\Gamma$ is a group and $\pi$: $\Gamma \r U(\cH)$ is non-rigid. First we establish a few straightforward facts:

\begin{proposition}\label{prop3.4}
Let $\Gamma > \Lambda$ be groups.

\bigskip\n
{\rm 1)} A finite dimensional unitary representation is strongly rigid. In fact, if $\| \pi - \omega\| \le \ve$ and $\ve \sqrt{n} < 1$, where $n = {\rm dim} \,\cH$, then $\pi$ and $\omega$ are equivalent via a unitary operator  $u \in U(\cH)$ with $\|u - Id\| \le \frac{3}{2} \, \ve \sqrt{n}$.

\bigskip\n
{\rm 2)} Let $\pi \in {\rm Hom} (\Gamma, U(\cH))$ and $\omega = {\rm Ind}_\Lambda^\Gamma \pi$ the induced representation. If $\omega$ is strongly rigid then $\pi$ is strongly rigid.

\bigskip\n
{\rm 3)} Assume $\Lambda \lhd \Gamma$ and let $p$: $\Gamma \r \Gamma / \Lambda$ be the canonical projection. If $\pi \circ p$ is (strongly) rigid then $\pi$ is (strongly) rigid.
\end{proposition}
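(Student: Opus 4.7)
\medskip\noindent
\textit{Proof plan.}
For (1), the plan is to produce a $\Gamma$-intertwiner between $\pi$ and $\omega$ close to $Id$ by averaging in the finite-dimensional Hilbert--Schmidt space $M_n(\IC)$. Let $\Gamma$ act on $M_n(\IC)$ by HS-isometries via $g\cdot T = \omega(g) T\pi(g)^{-1}$; the orbit of $Id$ is $\{\omega(g)\pi(g)^{-1}\}_g$, which lies in the HS-ball of radius $\ve\sqrt{n}$ about $Id$ because $\|\omega(g) - \pi(g)\|_{HS}\le \sqrt{n}\,\|\omega(g) - \pi(g)\| \le \sqrt{n}\,\ve$. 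Since $M_n(\IC)$ is finite dimensional, the closure $K$ of the image of $\Gamma$ in the unitary group of $(M_n(\IC),\|\cdot\|_{HS})$ is compact, so $T_0:=\int_K k\cdot Id\, dk$ is a well-defined $\Gamma$-fixed element, i.e., $T_0\pi(g) = \omega(g) T_0$ for all $g$, with $\|T_0 - Id\|_{HS}\le \ve\sqrt{n}$. Since $\ve\sqrt{n}<1$, $T_0$ is invertible, and its polar part $u = T_0|T_0|^{-1}$ is the desired unitary intertwiner; the bound $\|u - Id\|\le \tfrac{3}{2}\ve\sqrt{n}$ follows from Weyl's inequality $\|\,|T_0| - Id\,\|\le \|T_0 - Id\|$ together with a standard polar-decomposition estimate.

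For (2), the plan is to descend strong rigidity of $\omega = {\rm Ind}_\Lambda^\Gamma \pi$ to $\pi$ via the corner projection onto $\cH$. Given a sequence $\pi'_n\to \pi$ in ${\rm Hom}(\Lambda,U(\cH))$, Lemma~\ref{lem1.4}(2) yields $\omega'_n := {\rm Ind}_\Lambda^\Gamma \pi'_n \to \omega$ in $U(\cL)$ with $\cL = \ell^2(\cR,\cH)$. Strong rigidity of $\omega$ supplies unitaries $U_n\in U(\cL)$ with $U_n\omega U_n^{-1} = \omega'_n$ and $U_n\to Id$. Let $P$ be the orthogonal projection of $\cL$ onto $\ell^2(\{e\},\cH)\cong \cH$. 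The explicit induction formula, applied to a function supported on $\{e\}$, shows that $\omega(\lambda)$ and $\omega'_n(\lambda)$ both preserve $P\cL$ and hence commute with $P$, with compressions $\pi(\lambda)$ and $\pi'_n(\lambda)$ respectively. A short calculation,
\begin{equation*}
PU_nP\,\omega(\lambda)P = PU_n\omega(\lambda) P = P\omega'_n(\lambda) U_n P = \omega'_n(\lambda) \,PU_nP,
\end{equation*}
then shows that $V_n := PU_nP$, viewed as an operator on $\cH$, satisfies $V_n\pi(\lambda) = \pi'_n(\lambda)V_n$. Since $U_n\to Id$, $V_n\to Id_\cH$, so its polar part $u_n$ is unitary for large $n$, intertwines $\pi$ with $\pi'_n$, and satisfies $u_n\to Id_\cH$; this provides both rigidity and, via the explicit convergence, strong rigidity of $\pi$.

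For (3), the plan is direct. Given $\pi'_n\to \pi$ in ${\rm Hom}(\Gamma/\Lambda, U(\cH))$, composition gives $\pi'_n\circ p\to \pi\circ p$ in ${\rm Hom}(\Gamma, U(\cH))$. By (strong) rigidity of $\pi\circ p$ there are unitaries $u_n$ (tending to $Id$ modulo $Z(\pi\circ p)$ in the strong case) with $u_n(\pi\circ p)u_n^{-1} = \pi'_n\circ p$; surjectivity of $p$ upgrades this to $u_n\pi u_n^{-1} = \pi'_n$. Since $p$ is surjective, $Z(\pi\circ p) = Z(\pi)$, and convergence modulo the center transfers directly.

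The main obstacle is the descent in (2): ensuring that $V_n = PU_nP$, once restricted to $\cH$, genuinely intertwines $\pi$ with $\pi'_n$. This hinges on the fact that the corner projection $P$ belongs to the commutants of both $\omega(\Lambda)$ and $\omega'_n(\Lambda)$ and compresses them correctly to $\pi$ and $\pi'_n$; it is the careful handling of these two commutation relations, inherited from the explicit induction formula, that makes the argument work.
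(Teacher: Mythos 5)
Your argument is correct and matches the paper's proof at every step: part (1) is the Hilbert--Schmidt averaging argument followed by a polar decomposition, part (2) compresses the conjugating unitary through the corner projection onto $\cH\subset\ell^2(\cR,\cH)$ and then polar-decomposes via Lemma~\ref{lem3.5}, and part (3) is the same descent the paper leaves as ``straightforward.'' The only cosmetic deviation is in (1), where you take a Haar average over the compact closure of the image of $\Gamma$ in $U(M_n(\IC),\|\cdot\|_{HS})$ while the paper locates an invariant vector in the closed convex hull of the orbit of $I$ --- equivalent devices in finite dimensions; a small nit is that the polar-decomposition estimate you invoke (cf.\ Lemma~\ref{lem3.5}) actually yields $\|u-Id\|\le 2\ve\sqrt{n}$ rather than $\tfrac{3}{2}\ve\sqrt{n}$, and indeed the paper's own proof records $3\ve\sqrt{n}$, but the precise constant is immaterial for strong rigidity.
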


The following will be useful:

\begin{lemma}\label{lem3.5}
Let $T \in B(\cH)$ with $\|T - Id\| \le \ve$, $\ve < 1$ and $T = U(T^* T)^{\frac{1}{2}}$ the polar decomposition. Then $\|U - Id\| \le 2 \ve$.
\end{lemma}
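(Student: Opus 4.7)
The plan is to bound $\|U - \mathrm{Id}\|$ by splitting it via the polar decomposition as $\|U - \mathrm{Id}\| \le \|U - T\| + \|T - \mathrm{Id}\| \le \|U - T\| + \ve$, and then to control $\|U - T\|$ using that $U$ is unitary, so $\|U - T\| = \|U(\mathrm{Id} - |T|)\| = \|\mathrm{Id} - |T|\|$, where $|T| = (T^*T)^{1/2}$. Hence the whole game reduces to proving the clean estimate $\|\mathrm{Id} - |T|\| \le \ve$.

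First I would observe that since $\|\mathrm{Id} - T\| \le \ve < 1$, $T$ is invertible (Neumann series), the polar decomposition $T = U|T|$ genuinely gives a unitary $U$, and $\|T\| \le 1 + \ve$. Similarly, for every $\xi \in \cH$,
\begin{equation*}
\|T\xi\| \ge \|\xi\| - \|(T - \mathrm{Id})\xi\| \ge (1 - \ve)\|\xi\|,
\end{equation*}
which gives the two-sided operator inequality
\begin{equation*}
(1-\ve)^2\,\mathrm{Id} \le T^*T \le (1+\ve)^2\,\mathrm{Id}.
\end{equation*}

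Next I would apply operator monotonicity of the square root on the positive cone to deduce
\begin{equation*}
(1-\ve)\,\mathrm{Id} \le |T| \le (1+\ve)\,\mathrm{Id},
\end{equation*}
so that the spectrum of the self-adjoint operator $|T|$ is contained in $[1-\ve, 1+\ve]$, i.e.\ $\|\mathrm{Id} - |T|\| \le \ve$. Combining this with $\|U - T\| = \|\mathrm{Id} - |T|\|$ and the initial triangle inequality yields $\|U - \mathrm{Id}\| \le 2\ve$, as required.

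There is no real obstacle; the only point to be careful about is not to use $\|\mathrm{Id} - T^*T\|$ directly (which only gives $2\ve + \ve^2$), but to pass through the sandwich $(1-\ve)^2 \mathrm{Id} \le T^*T \le (1+\ve)^2 \mathrm{Id}$ so that operator monotonicity of the square root delivers the sharp bound $\ve$ on $\|\mathrm{Id} - |T|\|$.
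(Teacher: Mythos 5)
Your proof is correct and takes essentially the same route as the paper: split $\|U-\mathrm{Id}\|$ via the triangle inequality using the polar decomposition, then reduce to showing the spectrum of $|T|=(T^*T)^{1/2}$ lies in $[1-\ve,1+\ve]$. The one cosmetic difference is that you invoke operator monotonicity of the square root on the sandwich $(1-\ve)^2\mathrm{Id}\le T^*T\le(1+\ve)^2\mathrm{Id}$, whereas the paper gets the same spectral localization more directly from the identity $\|(T^*T)^{1/2}\xi\|=\|T\xi\|$ for every unit vector $\xi$.
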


\begin{proof} We have,
\begin{equation*}
\| U - Id\|  \le \| T-Id\| + \|U(T^* T)^{\frac{1}{2}} - U\|  \le \ve + \|(T^* T)^{\frac{1}{2}} - Id\|\,.
\end{equation*}

\n
Now observe that for any unit vector $\xi$,
\begin{equation*}
\|(T^* T)^{\frac{1}{2}}\, \xi \| = \|T \xi \| \in [1-\ve, 1 + \ve]
\end{equation*}
and hence the spectrum of $(T^* T)^{\frac{1}{2}}$ is contained in $[1-\ve, 1 + \ve]$, which implies $\|T^* T)^{\frac{1}{2}} - Id\| \le \ve$, and concludes the proof.
\end{proof}

\medskip\n
{\it Proof of Proposition {\rm \ref{prop3.4}}:} 1) Setting $\ell(\gamma) T : = \pi (\gamma) \,T \omega(\gamma)^{-1}$ for $T \in B(\cH)$ and $\gamma \in \Gamma$, we obtain a unitary representation of $\Gamma$ into the space $B(\cH)$ endowed with the Hilbert-Schmidt norm. For $T = I$ we get:
\begin{align*}
\| \ell(\gamma) \,I - I\|_{HS} & = \| \pi(\gamma) - \omega(\gamma)\|_{HS}
\\[1ex]
& \le  \sqrt{n} \,\|\pi(\gamma) - \omega(\gamma)\| \le \sqrt{n} \cdot \ve\,.
\end{align*}

\n
Since $\|I\|_{HS} = \sqrt{n}$ and $\ve < 1$, there is a $\ell(\Gamma)$-invariant vector $T$ in the closed convex hull of $\{ \ell(\gamma)\,I: \, \gamma \in \Gamma\}$; then $\|T-I\|_{HS} \le \sqrt{n} \, \ve < 1$ and hence $T$ is an invertible operator with $\omega(\gamma) = T^{-1} \pi(\gamma) T$, $\forall \gamma \in \Gamma$; applying Lemma \ref{lem3.5} we get $u \in U(\cH)$ with $\omega(\gamma) = u^{-1} \pi(\gamma) u$ and $\|u - Id\| \le 3 \sqrt{n} \,\ve$.

\bigskip\n
2) Let $\pi \in {\rm Hom}(\Lambda, U(\cH))$, $\omega = {\rm Ind}_{\Lambda}^{\Gamma} \pi$ the induced representation into the Hilbert space $\cL = \ell^2(\cR, \cH)$, where $\Gamma = \bigcup_{\gamma \in \cR} \gamma \Lambda$. Let $\ve > 0$ and $\delta (\ve) > 0$ such that whenever $\|\omega' - \omega\| \le \delta (\ve)$, there is $u \in U(\cL)$ with: $\omega' = u^{-1} \omega u$ and $\|u - I\| \le \ve$.  Let $\pi' \in {\rm Hom}(\Gamma, U(\cH))$ with $\|\pi - \pi'\| \le \delta (\ve)$ and $\omega' = {\rm Ind}^\Gamma_\Lambda \pi'$. Then (Lemma \ref{lem1.4} (2)), $\|\omega-\omega'\| \le \delta(\ve)$ and thus there is $u \in U(\cL)$ with $\|u - I\| \le \ve$ and $\omega' = u^{-1} \omega u^{-1}$. Let $P$ be the orthogonal projection of $\cL = \cL(\cR, \cH)$ onto $\cL(e,\cH) = \cH$. Then $T = P \,u|_{\cH} : \cH \r \cH$ intertwines $(\omega' |_\Lambda)|_\cH$ and $(\omega|_\Lambda)|_\cH$,  that is $\pi'$ and $\pi$, and $\| T - Id\| \le \ve$ which allows to conclude using Lemma \ref{lem3.5}.

\bigskip\n
3) Straightforward. \hfill $\square$

\bigskip
Now we turn to a basic example of non-rigidity which concerns the free group $\IF_{\infty}$ on countably many generators and uses essentially a construction of Pytlik and Szwarc, see \cite{PS} 2.2 and 2.3,

\begin{theorem}\label{theo3.6}
The left regular representation $\lambda$ of $\IF_{\infty}$ on $\ell^2(\IF_{\infty})$ is not rigid. In fact there is a path of representations:
\begin{align*}
[0,1] & \longrightarrow \; {\rm Hom}(\IF_{\infty}, U \ell^2(\IF_\infty))
\\[1ex]
r & \longmapsto \; \pi_r
\end{align*}
such that 

\medskip\n
{\rm (1)} $\pi_0 = \lambda$.
 
\medskip\n
{\rm (2)}  $\pi_r$ and $\pi_{r'}$ are irreducible, inequivalent $\forall r > r' > 0$.

\medskip\n
{\rm (3)} $r \longmapsto \pi_r$ is continuous in the uniform topology of ${\rm Hom}(\IF_{\infty}, U \ell^2(\IF_\infty))$.
\end{theorem}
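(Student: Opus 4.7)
The plan is to apply the Pytlik--Szwarc construction \cite{PS} of an analytic family of uniformly bounded representations of free groups, restricted to a unitary subfamily that passes through the left regular representation, and to adapt it from $\IF_n$ to $\IF_\infty$. The PS construction produces, for each parameter $z$ in a suitable subdomain of $\IC$, operators $\pi_z(a_i)$ on $\ell^2(\IF_\infty)$ for the free generators $a_i$; freeness of $\IF_\infty$ extends this uniquely to a homomorphism into $B(\ell^2(\IF_\infty))$. Along a distinguished unitary arc $\{z(r) : r \in [0,1]\}$ passing through the parameter at which $\pi_z = \lambda$, I would set $\pi_r := \pi_{z(r)}$.

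Since the PS formulas depend only on the tree structure of the Cayley graph and on word length --- both of which extend verbatim to the infinite-valence Cayley tree of $\IF_\infty$ --- the construction transfers directly and produces unitary $\pi_r$ with $\pi_0 = \lambda$, giving property (1). Irreducibility of $\pi_r$ for $r \in (0,1]$ is then obtained as in PS 2.3, e.g.\ by showing $\pi_r(\IF_\infty)'' = B(\ell^2(\IF_\infty))$ via a cyclic and separating vector argument based at $\delta_e$. Pairwise inequivalence for $0 < r' < r \le 1$ reduces to distinguishing a real-analytic spectral invariant of $\pi_r$: one computes a diagonal matrix coefficient along a sequence $g_k \in \IF_\infty$ going to infinity along a geodesic ray and shows it depends non-trivially on $r$.

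The central technical point, and the main obstacle, is uniform norm continuity of $r \mapsto \pi_r$: one must bound $\sup_{g \in \IF_\infty} \|\pi_r(g) - \pi_{r'}(g)\|$ by a function of $|r-r'|$ tending to $0$, independently of the word length $|g|$. The plan is to expand $\pi_r(g) - \pi_{r'}(g)$ as a telescoping sum along the geodesic from $e$ to $g$ in the Cayley tree, each summand being a finite-rank operator of norm $O(|r-r'| \cdot \rho^k)$ for some $\rho < 1$ and $k$ the distance along the geodesic, so that the resulting geometric sum is bounded independently of $|g|$. This uniform estimate is the genuine analogue of PS's uniform boundedness estimate for their analytic family, and once it is in place, the three assertions follow from the PS formalism applied to the tree of $\IF_\infty$.
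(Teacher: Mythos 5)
Your proposal follows essentially the same route as the paper: take the Pytlik--Szwarc analytic family $\pi_z^\circ(a) = (1-zP)^{-1}\lambda(a)(1-zP)$ (with $P$ the ``delete the last letter'' operator, well-defined on the infinite-valence tree of $\IF_\infty$), conjugate by $T_z$ to make it unitary on the real arc, cite \cite{PS} for $\pi_0 = \lambda$ and for irreducibility/inequivalence, and then prove uniform norm continuity by exploiting that the perturbation $P - \lambda(a)P\lambda(a)^{-1}$ has range in the finite-dimensional geodesic space $K(a)$ on which $P$ is a contraction. Where you describe a ``telescoping sum along the geodesic'' with geometric decay $O(|r-r'|\rho^k)$, the paper instead writes the exact identity $\pi_z^\circ(a)\lambda(a)^{-1} - \pi_w^\circ(a)\lambda(a)^{-1} = \bigl(\sum_{n\ge 0}(z^n-w^n)P^n\bigr)(P-\lambda(a)P\lambda(a)^{-1})$ and bounds it by $\sum_{n\ge 1}|z^n-w^n|$ --- the same geometric decay phenomenon, just obtained from the power-series expansion of $(1-zP)^{-1}$ rather than from a telescope; the two are equivalent under re-indexing, and both deliver the estimate uniformly in $g$.

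Two small caveats worth attending to if you flesh this out: the uniform continuity estimate degenerates as $r \to 1$ (where $T_r$ fails to be invertible), so the clean argument lives on $[0,\rho]$ for $\rho < 1$ and one should parametrize the path accordingly; and for pairwise inequivalence you do not need to recompute matrix coefficients --- this is already in \cite{PS}, Theorem 1, which the paper simply cites for assertions (1) and (2), leaving only assertion (3) to prove.
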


Using Proposition \ref{prop3.4} (2) we deduce

\begin{corollary}\label{cor3.7}
Assume that $\Gamma$ contains a non-abelian free group. Then the regular representation of $\Gamma$ in $\ell^2(\Gamma)$ is not strongly rigid.
\end{corollary}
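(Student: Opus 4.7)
The plan is to invoke Theorem~\ref{theo3.6} on $\IF_{\infty}$ and transfer the resulting failure of rigidity to $\Gamma$ via Proposition~\ref{prop3.4}~(2). The first step is to realise $\IF_{\infty}$ as a subgroup of $\Gamma$: this is immediate since $\Gamma$ contains $\IF_2$, and the commutator subgroup $[\IF_2,\IF_2]$ is a free group of countably infinite rank, so $\IF_{\infty}<\IF_2<\Gamma$.

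\medskip
Next, I would identify $\lambda_\Gamma$ with a representation induced from $\IF_{\infty}$. Writing $\lambda_G$ for the left regular representation of $G$ and using $\lambda_G = {\rm Ind}_{\{e\}}^{G}(\mathbf{1})$, transitivity of induction applied to $\{e\}<\IF_{\infty}<\Gamma$ gives
\begin{equation*}
\lambda_\Gamma \;\cong\; {\rm Ind}_{\IF_{\infty}}^{\Gamma}\bigl(\lambda_{\IF_{\infty}}\bigr).
\end{equation*}
In the conventions of Section~1 this may be checked directly via the unitary $U \colon \ell^2(\cR,\ell^2(\IF_{\infty})) \to \ell^2(\Gamma)$ defined by $(Uf)(g) = f(r(g))(g\,r(g)^{-1})$, using the explicit formula $(\overline{\mu}(\gamma)f)(x)=\mu(x\gamma r(x\gamma)^{-1})f(r(x\gamma))$.

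\medskip
Finally, Theorem~\ref{theo3.6} asserts that $\lambda_{\IF_{\infty}}$ is not rigid, and hence \emph{a fortiori} not strongly rigid. The contrapositive of Proposition~\ref{prop3.4}~(2) then immediately yields that $\lambda_\Gamma$ is not strongly rigid, which is the claim. There is essentially no serious obstacle: all the substantive content has been absorbed into Theorem~\ref{theo3.6} and Proposition~\ref{prop3.4}~(2), and the only point requiring care is verifying that the induction-in-stages identification $\lambda_\Gamma \cong {\rm Ind}_{\IF_{\infty}}^{\Gamma}\lambda_{\IF_{\infty}}$ is compatible with the particular coset-representative conventions used in Section~1.
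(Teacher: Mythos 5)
Your proposal is correct and follows essentially the same route as the paper: embed $\IF_{\infty}$ in $\Gamma$ (via $\IF_{\infty} < \IF_2 < \Gamma$), note by induction in stages that $\lambda_\Gamma \cong {\rm Ind}_{\IF_{\infty}}^{\Gamma}\lambda_{\IF_{\infty}}$, and apply the contrapositive of Proposition~\ref{prop3.4}~(2) together with Theorem~\ref{theo3.6} (using that strongly rigid implies rigid). The paper simply cites Proposition~\ref{prop3.4}~(2) without spelling out these details; your write-up fills them in faithfully, and the remaining point about coset conventions is harmless since strong rigidity is a unitary-equivalence invariant and ${\rm Ind}_{\Lambda}^{\Gamma}\lambda_{\Lambda}\cong\lambda_{\Gamma}$ in any realization.
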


\n
{\it Proof of Theorem {\rm \ref{theo3.6}}:} Set $F = \IF_\infty$ for simplicity. For $a \in F$ define $\ov{a}$ to be the unique word which is obtained from $a$ by deleting the last letter. On the Hilbert space $\ell^2 F$, consider the operator $P$ which is given by the formula
\begin{equation*}
P(\delta_a) = \delta_{\ov{a}} \;\mbox{for $a \not= e$ and} \; P(\delta_e) = 0\,.
\end{equation*}

\n
For $a \in F$, consider the finite dimensional subspace $K(a)$, which arises as the linear span of $\{\delta_a,\delta_{\ov{a}}, \dots, \delta_e\}$. Then we have,

\bigskip\n
(1) The image of $P-\lambda(a)\,P\lambda(a)^{-1}$ is contained in $K(a)$.

\medskip\n
(2) $\|P-\lambda(a)\,P\lambda(a)^{-1}\| \le 2$.

\medskip\n
(3) The operator $P$ preserves $K(a)$ and its restriction to $K(a)$ is a contraction.

\bigskip
Since $P$ is locally nilpotent on the linear span of $\{\delta_a \in a \in F\}$, the formula
\begin{equation*}
\pi^o_z(a) = (1-zP)^{-1} \,\lambda(a)(1-zP)
\end{equation*}

\medskip\n
defines a representation of $F$ on the linear span of $\{\delta_a, a \in F\}$. Now define $T_z = Id + (\sqrt{1 - z^2} - 1)T$, where $T$ is the orthogonal projection onto $\IC \delta_e$ and one has chosen the principal branch of the square root, finally set
\begin{equation*}
\pi_z(a) = T_z \pi_z^\circ (a)\,T_z^{-1}, \quad \forall a \in F\,.
\end{equation*}

\n
Assertions (1) and (2) are proven in \cite{PS}, Theorem 1. In order to see that for $z$ with $|z| < 1$, we have
\begin{equation*}
\sup\limits_{a \in F} \, \|\pi_z(a) - \pi_w(a)\| \r 0, \quad {\rm as} \; w \r z
\end{equation*}

\n
it is sufficient to show the corresponding estimate for $\pi^\circ_w(a)$. We compute
\begin{equation*}
\pi^o_z(a) \lambda(a)^{-1} = (1 - z P)^{-1} \,\lambda(a)(1-zP) \,\lambda(a)^{-1} = 1 + \Big( \dis\sum\limits_{n=0}^\infty \,z^n \,P^n\Big)\; (P-\lambda(a) \,P\lambda (a)^{-1})
\end{equation*}
and can see that
\begin{equation*}
\pi^o_z(a) \lambda(a)^{-1} - \pi^o_w(a) \lambda(a)^{-1} = \Big( \dis\sum\limits_{n=0}^\infty \,(z^n- w^n)P^n\Big)\; (P-\lambda(a) \,P\lambda (a)^{-1})\,.
\end{equation*}

\n
For $\xi \in \ell^2(F)$, the vector $(P-\lambda(a)\,P\lambda(a)^{-1}) \xi$ lies in $K(a)$. Since $P$ preserves $K(a)$ and is a contraction on $K(a)$, this implies
\begin{align*}
\|\pi^o_z(a) \lambda(a)^{-1} \,\xi - \pi^o_w(a) \lambda(a)^{-1} \, \xi\| & \le \dis\sum\limits^\infty_{n=0} \,|z^n - w^n| \, \|\xi \|, \quad \forall \xi \in \ell^2(F)\,.
\intertext{Hence}
\|\pi^o_z(a)- \pi^o_w(a)\| & \le \dis\sum\limits^\infty_{n=1} \,|z^n - w^n|\,.
\end{align*}

\medskip\n
This implies the claim since, for $z$ with $|z| < 1$, we have $\sum_{n=1}^\infty |z^n - w^n| \r 0$ as $w \r z$. \hfill $\square$

\section{Stability for special linear groups of integral matrices}
\setcounter{equation}{0}

In this section we establish the two stability results announced in the introduction.

\medskip
Let $K$ be a finite extension of $\IQ$, $\cO \subset K$ the ring of algebraic integers and $B \subset \cO$ an order in $\cO$. Let $S \subset B$ be a multiplicatively closed subset and denote by $A = B_S$ the localization of $B$ at $S$. Throughout this section the ring $A$ will be fixed.

\begin{theorem}\label{theo4.1}
If $n \ge 3$, then $SL(n, A)$ is Ulam stable.~In fact there exists  \linebreak $c = (n,A) > 0$ such that
\begin{equation*}
\dis\frac{\delta}{2} \le F_{SL(n,A)}^{fd} (\delta) \le c \delta \; \mbox{for all $\delta \ge 0$}\,.
\end{equation*}
\end{theorem}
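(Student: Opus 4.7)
The plan is to combine bounded generation of $SL(n,A)$ by abelian elementary subgroups with Kazhdan's Ulam stability theorem for amenable groups. For $n \ge 3$, the Carter--Keller theorem and its extensions to $S$-integer rings (Bass, Vaserstein, Liehl, Morris, etc.) provide a constant $L = L(n,A)$ such that every $g \in SL(n,A)$ is a product of at most $L$ elementary matrices $e_{ij}(a)$. Each elementary subgroup $U_{ij} = \{e_{ij}(a) : a \in A\}$ is isomorphic to the additive group $(A,+)$, in particular abelian and amenable, so Theorem \ref{theo2.1} applies to it.

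Given a finite-dimensional $\delta$-representation $\mu: SL(n,A) \to U(\cH)$, I would first apply Kazhdan's theorem to each restriction $\mu|_{U_{ij}}$ to obtain a genuine unitary representation $\pi_{ij}: U_{ij} \to U(\cH)$ with $\|\pi_{ij} - \mu|_{U_{ij}}\| \le \delta + 120\delta^2$. The goal is to glue the $\pi_{ij}$ into a single homomorphism $\nu: SL(n,A) \to U(\cH)$ that remains $O(\delta)$-close to $\mu$. Since $n \ge 3$, the Steinberg presentation of $SL(n,A)$ reduces this to verifying that the $\pi_{ij}$ satisfy the Steinberg commutator relations; the resulting homomorphism of the Steinberg group $St(n,A)$ will descend through the central kernel $K_2(n,A)$ because it is constrained to be $O(\delta)$-close to $\mu$, which is already defined on $SL(n,A)$.

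The main obstacle is promoting the Steinberg relations from approximate to exact. The $\delta$-representation property of $\mu$ combined with the closeness of $\pi_{ij}$ to $\mu$ yields $[\pi_{ij}(a), \pi_{kl}(b)] = 1$ (for disjoint index sets) and $[\pi_{ij}(a), \pi_{jk}(b)] = \pi_{ik}(ab)$ (for $i \ne k$) only up to $O(\delta)$. To force equality I would iteratively invoke the generalized Kazhdan averaging of Theorem \ref{theo2.2}, taking $\Gamma$ successively to be the abelian subgroup spanned by a pair $U_{ij}, U_{kl}$ (in the disjoint case) or the nilpotent subgroup generated by $U_{ij}, U_{jk}, U_{ik}$ (in the Chevalley commutator case). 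Averaging over such an amenable $\Gamma \subset SL(n,A)$ produces a nearby homomorphism on $\Gamma$; its restrictions to the individual root subgroups must agree with the previously constructed $\pi_{ij}$ by a uniqueness argument (a spectral Hilbert--Schmidt fixed-point argument, legitimate since $\dim \cH < \infty$), and this pins down the desired exact identities.

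Once $\nu$ is produced, bounded generation closes the estimate. For any $g = e_{i_1 j_1}(a_1) \cdots e_{i_L j_L}(a_L)$, the triangle inequality combined with $(L-1)$-fold use of the $\delta$-defect of $\mu$ and $L$-fold use of $\|\pi_{i_k j_k} - \mu|_{U_{i_k j_k}}\| \le \delta + 120\delta^2$ yields $\|\mu(g) - \nu(g)\| \le (L-1)\delta + L(\delta + 120\delta^2)$, hence the upper bound $F^{fd}_{SL(n,A)}(\delta) \le c(n,A)\,\delta$. The lower bound $\delta/2 \le F^{fd}_{SL(n,A)}(\delta)$ is the general observation that $F_\Gamma^{(1)}(\delta) \ge \delta/2$ whenever $\Gamma \ne \{e\}$, noted in Section 1.
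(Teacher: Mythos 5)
Your proposal correctly identifies the three main ingredients that also appear in the paper: bounded generation of elementary subgroups (the paper's Theorem \ref{theo4.3}), Kazhdan's amenable stability theorem, and the lower bound $\delta/2 \le F^{(1)}_\Gamma(\delta)$. However, the strategy you propose for gluing is fundamentally different from the paper's, and the crucial step does not work as stated. You want to build a single homomorphism $\nu$ out of the locally corrected representations $\pi_{ij}$ by promoting the Steinberg relations from approximate to exact, invoking ``uniqueness'' to pin down the $\pi_{ij}$ after each averaging over a larger amenable $\Gamma$. The problem is that Johnson's theorem (Theorem \ref{theo3.2}) only gives uniqueness \emph{up to conjugation} by a unitary close to the identity, not equality. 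If you average $\mu$ over the Heisenberg group $\langle U_{ij},U_{jk},U_{ik}\rangle$ to get a homomorphism $\nu_{\rm Heis}$, its restriction to $U_{ij}$ is merely close to $\pi_{ij}$; nothing forces them to coincide, and different triples of roots will produce pairwise incompatible corrections. Consequently you cannot force $[\pi_{ij}(a),\pi_{jk}(b)]=\pi_{ik}(ab)$ exactly, and the map $g = e_{i_1j_1}(a_1)\cdots e_{i_Lj_L}(a_L)\mapsto \pi_{i_1j_1}(a_1)\cdots\pi_{i_Lj_L}(a_L)$ would not be well-defined (different decompositions give different values). Nor is it clear that a representation of $St(n,A)$, if you had one, would descend through $K_2$: closeness to $\mu$ constrains the values but does not kill the central obstruction.

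The paper's actual proof sidesteps this gluing entirely. It corrects $\pi$ only on a single unipotent $A^2$, then uses $SL(2,A)$-covariance plus Johnson's strong rigidity to show the \emph{character} of the corrected representation $\mu$ is $SL(2,A)$-invariant; Lemma \ref{lem4.4} then forces $\mu$ to factor through a finite quotient $(A/qA)^2$. This is the key finiteness input you are missing: it implies $\pi$ is $O(\varepsilon)$-close to the identity on $E^\bullet(n,A;qA)$, hence on all of $E(n,A;qA)$ by bounded generation. Since $E(n,A;qA)$ is a normal subgroup of finite index (Lemma \ref{lem4.2}), the problem reduces to stability for the finite (hence amenable) quotient $Q(n,A;qA)$, where Kazhdan's theorem finishes. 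In short, instead of trying to satisfy the global relations of $SL(n,A)$ on the nose, the paper shows $\pi$ essentially vanishes on a big normal subgroup and reduces to the amenable case; you should look for a similar reduction to a quotient rather than attempting to solve the Steinberg relations exactly.
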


\medskip
Let $n \in \IN$ and $q \subset A$ be an ideal. We denote by $E^\bullet (n,A; q)$ the set of elementary matrices in $SL(n,A)$ which are congruent to $1_n$ modulo $q$. Let $E^\bullet(n,A;q)$ denote the closure of $E^\bullet (n,A;q)$ inside $SL(n,A)$ under conjugation. We denote by $E(n, A;q)$ the subgroup of $SL(n,A)$ generated by $E^\bullet(n,A;q)$; finally denote by $SL(n,A;q)$ the congruence subgroup formed by those elements which are congruent to the identity modulo $q$.

\begin{lemma}\label{lem4.2}
The subgroup $E(n,A;q) \subset SL(n,A)$ is normal and of finite index.
\end{lemma}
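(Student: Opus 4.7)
My plan is to separate the normality and the finite-index assertions, the first being essentially automatic from the construction and the second requiring serious input.

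For normality, by definition $E^\bullet(n,A;q)$ is the closure of the set of elementary matrices congruent to $1_n$ modulo $q$ under conjugation by all of $SL(n,A)$, and is therefore itself stable under $SL(n,A)$-conjugation. Hence the subgroup $E(n,A;q)$ that it generates is normal in $SL(n,A)$.

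For finite index, I would factor the inclusion through the principal congruence subgroup,
\[
E(n,A;q) \;\subseteq\; SL(n,A;q) \;\subseteq\; SL(n,A),
\]
and treat the two steps separately. The outer index $[SL(n,A) : SL(n,A;q)]$ is bounded by $|SL(n,A/q)|$ via the reduction map. Writing $A = B_S$ with $B$ an order in a number field, $B$ is finitely generated as a $\IZ$-module; provided $q \ne 0$, the contraction $q \cap B$ is a nonzero ideal of $B$, so $B/(q \cap B)$ is finite, and $A/q$, being a localization of this finite ring at the image of $S$, is finite as well. Hence $|SL(n,A/q)| < \infty$. The degenerate cases $q = 0$ and $q = A$ are trivial.

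The inner index $[SL(n,A;q) : E(n,A;q)]$ is the substantive point, and this is where the hypothesis $n \ge 3$ enters decisively. I would invoke the congruence subgroup theorem of Bass--Milnor--Serre (extended, if necessary, from the maximal order $\cO$ to the general order $B \subset \cO$ by a routine finite-index comparison, together with flatness of the localization $B \rightsquigarrow B_S$): for $n \ge 3$, the quotient $SL(n,A;q)/E(n,A;q)$ is controlled by the relative group $SK_1(A,q)$, which for such arithmetic $A$ is a finite abelian group. This invocation of Bass--Milnor--Serre is the main obstacle to a self-contained proof; the rest is bookkeeping.
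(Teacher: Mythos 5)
The paper itself gives no proof of Lemma~\ref{lem4.2}; it is stated immediately before the citation of Witte Morris \cite{W} for Theorem~\ref{theo4.3}, and the intent is clearly to treat it as part of the same circle of results imported from that reference (Witte Morris works in exactly this generality of orders and their localizations and establishes both the normality and the finite index). So there is no in-paper proof to compare against; I can only assess your argument on its own terms, which is largely sound.

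Your normality argument is correct and is the whole point of defining $E^\bullet(n,A;q)$ as the conjugation closure: a subgroup generated by an $SL(n,A)$-conjugation-invariant subset is normal in $SL(n,A)$. Your factorization of the index through the chain $E(n,A;q)\subseteq SL(n,A;q)\subseteq SL(n,A)$ is the standard route. The outer step is fine: for $q\ne 0$, $q\cap B$ is a nonzero ideal of the order $B$, so $B/(q\cap B)$ is finite, and $A/q$ is a further quotient (localization of a finite ring), hence finite; thus $SL(n,A/q)$ is finite and bounds the index. The inner step, finiteness of $SL(n,A;q)/E(n,A;q)$ via the Mennicke--Bass--Milnor--Serre congruence subgroup machinery for $n\ge 3$, is indeed the substantive input; this is what \cite{W} re-derives, and is the same mathematics. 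You correctly flag that extending from $\cO_S$ to a general order $B_S$ needs an argument, though calling it ``routine'' understates it slightly; the cleanest thing is to simply cite \cite{W}, which handles this class of rings directly, rather than re-deriving the order case from the maximal-order case.

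One genuine slip: you dismiss $q=0$ as ``trivial.'' It is not trivially true --- it is false. For $q=0$ one has $E^\bullet(n,A;0)=\{1_n\}$, hence $E(n,A;0)=\{1\}$, which has infinite index in $SL(n,A)$. The lemma implicitly assumes $q\ne 0$, which is harmless because in the proof of Theorem~\ref{theo4.1} it is only ever invoked for $q=q_0 A$ with $0\ne q_0\in\IZ$. You should state the nonvanishing hypothesis explicitly rather than claiming the degenerate case holds.
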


\begin{theorem}\label{theo4.3} {\rm (\cite{W} (Cor. 3.1.3)} ~

\medskip
Let $A$ be a ring as above, let $n \ge 3$ be an integer and $q \subset A$ be an ideal in $A$. There exists $r(A,n) \in \IN$, not depending on the ideal $q$, such that the set $E^\bullet(n,A;q), r(A,n)$-boundedly generates, that is, every element in $E(n,A;q)$ is a product of at most $r(A,n)$ elements of $E^\bullet(n,A;q)$.
\end{theorem}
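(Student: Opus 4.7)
The plan is to establish the bound by an effective column-reduction argument in $SL(n,A)$, leveraging the Steinberg commutator relations that become available precisely for $n\ge 3$, together with a uniform bound on elementary generation of $SL(n,A)$ itself.  The crucial algebraic identity is
$$
e_{ij}(a)=[e_{ik}(a),\,e_{kj}(1)],\qquad i,j,k\text{ distinct},
$$
valid in $SL(n,A)$ for every $a\in A$ once $n\ge 3$.  Applied to $a\in q$, it realises every elementary matrix $e_{ij}(a)$ with $a\in q$ as a commutator of an element of $E^\bullet(n,A;q)$ with an ordinary elementary, placing $e_{ij}(a)$ itself in $E^\bullet(n,A;q)$ at a bounded cost of four factors.

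Given $g\in E(n,A;q)$, I would reduce $g$ to the identity in a bounded number of stages.  Since $g\equiv 1_n\pmod{q}$, the first column of $g$ has the form $(1+qa_1,qa_2,\ldots,qa_n)^{\mathrm T}$.  Assuming for the moment that $1+qa_1$ is a unit, left-multiplying by $e_{i1}(-qa_i(1+qa_1)^{-1})\in E^\bullet(n,A;q)$ for $i=2,\ldots,n$ clears the subdiagonal entries of the first column; a symmetric row operation then yields a block-diagonal matrix $\mathrm{diag}(u,g')$ with $u\in 1+q$ and $g'\in E(n-1,A;q)$.  One then inducts on the rank, terminating at $n=3$ where the remaining block can be finished off directly via the commutator identity.

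The main obstacle, and the reason the result is non-trivial, is guaranteeing that the pivot can be forced to be a unit of $A$ in a bounded number of elementary moves, \emph{uniformly in} $q$.  This is where the Mennicke-symbol and Bass--Milnor--Serre machinery over the arithmetic ring $A$ enters: using strong approximation in $A$ together with Dirichlet-type density of prime ideals in $\cO$, one modifies the first column by a bounded number of elementary additions so that an appropriate $A$-linear combination of its entries becomes a unit congruent to $1$ modulo $q$.  Each such elementary addition can be brought inside $E^\bullet(n,A;q)$ via the commutator identity above at bounded cost, keeping the total word length controlled purely in terms of $A$ and $n$.  Summing the constants produced at each of the finitely many induction stages yields the desired $r(A,n)$ independent of $q$.
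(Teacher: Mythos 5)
The paper does not give a proof of Theorem~\ref{theo4.3}: it is imported wholesale from Witte Morris's exposition \cite{W} of the unpublished Carter--Keller--Paige theorem, where it appears as a corollary of a long development. So the only question is whether your outline could be completed into a proof, and as written it cannot.

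There are several concrete gaps. Your opening observation about the Steinberg relation is vacuous: if $a\in q$ then $e_{ij}(a)$ already lies in $E^\bullet(n,A;q)$ as a single factor, by definition, so rewriting it as $[e_{ik}(a),e_{kj}(1)]$ (four factors, two of which are not even congruent to $1_n$ modulo $q$) gains nothing. The induction step then fails at the determinant: after clearing the first column and row with $q$-elementary moves (granting the pivot is a unit) you are left with $\mathrm{diag}(u,g')$ where $u\in 1+q$ is a unit, so $\det g'=u^{-1}\ne 1$ in general; thus $g'$ is not in $SL(n-1,A)$, let alone in $E(n-1,A;q)$, and the induction you propose cannot even be set up. The correct reduction descends to a $2\times 2$ block whose residual obstruction is a Mennicke symbol, and showing that this symbol can be disposed of with a bounded number of $q$-elementary conjugates is the whole arithmetic content of the Bass--Milnor--Serre and Carter--Keller--Paige work; your sketch replaces it by a single sentence invoking ``strong approximation'' and ``Dirichlet-type density,'' which is precisely where all the difficulty lives. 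Finally, the uniformity in $q$ --- the actual point of the theorem --- is not addressed at all: the number of moves needed to force a unit pivot and to kill the Mennicke symbol depends a priori on $q$ and on the entries of $g$, and ``summing the constants at each stage'' does not remove that dependence. In \cite{W} this uniformity is obtained by an application of the Compactness Theorem of first-order logic to an axiomatized class of rings, a mechanism of a completely different nature from anything in your outline.
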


\begin{lemma}\label{lem4.4}
Let $A$ be a ring as above. Every $k$-dimensional unitary representation of $A^2$ whose character is $SL(2,A)$-invariant factors through $(A/qA)^2$ for some $q \in \IN$.
\end{lemma}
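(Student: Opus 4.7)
Since $A^2$ is abelian and $\pi$ is finite-dimensional, I would begin by decomposing $\pi = \bigoplus_{i=1}^{k} \chi_i$ as a direct sum of characters $\chi_i \in \widehat{A^2}$ (with multiplicity). The hypothesis then reads $\sum_i \chi_i(gx) = \sum_i \chi_i(x)$ for every $g \in SL(2,A)$, $x \in A^2$, and linear independence of distinct characters of the abelian group $A^2$ (Artin/Dedekind) forces $SL(2,A)$ to permute the multiset $\{\chi_1, \dots, \chi_k\}$. In particular each $\chi_i$ has a finite $SL(2,A)$-orbit in $\widehat{A^2}$.

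Next I would convert finiteness of the orbit of $\chi_i$ into a divisibility statement. Identify $\widehat{A^2} \cong \widehat A \times \widehat A$ and write $\chi_i(x_1,x_2) = \alpha_i(x_1)\beta_i(x_2)$. A direct computation using the action of the elementary matrix $E_{12}(a) \in SL(2,A)$ on column vectors gives
\begin{equation*}
\bigl(E_{12}(a)\cdot \chi_i\bigr)(x_1,x_2) \;=\; \chi_i(x_1+ax_2,\,x_2) \;=\; \alpha_i(x_1)\,\gamma_a(x_2)\,\beta_i(x_2),
\end{equation*}
where $\gamma_a(x) := \alpha_i(ax)$. So the $\{E_{12}(a):a\in A\}$-orbit of $\chi_i$ in $\widehat{A^2}$ is parametrised by the image of the additive homomorphism $a \mapsto \gamma_a$ from $A$ to $\widehat A$. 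Since the orbit is finite, this image is finite, i.e.\ the kernel has finite index in $A$. The kernel is precisely the ideal
\begin{equation*}
J_{\alpha_i} \;:=\; \{a \in A : \alpha_i \text{ vanishes on } aA\}.
\end{equation*}
The same argument applied to $E_{21}(a)$ yields an ideal $J_{\beta_i} \subset A$ of finite additive index on which $\beta_i$ vanishes in the analogous sense.

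To close, I would invoke the elementary ring-theoretic observation that any ideal $J$ in a commutative ring $A$ of finite additive index $n$ contains $nA$, since $n \cdot 1 \in J$. Applying this to $J_{\alpha_i}$ and $J_{\beta_i}$ produces positive integers $n_i, m_i$ with $\alpha_i$ trivial on $n_iA$ and $\beta_i$ trivial on $m_iA$, so that $\chi_i$ factors through $(A/q_iA)^2$ for $q_i := n_im_i$. Taking $q$ to be the least common multiple of $q_1,\dots,q_k$ (or, equivalently, their product) makes every $\chi_i$, and hence $\pi$ itself, factor through $(A/qA)^2$, as required. The only non-routine point is the identification of the $E_{12}(a)$- and $E_{21}(a)$-actions on the dual $\widehat{A^2}$; once that computation is in place, the ``finite orbit $\Rightarrow$ finite-index ideal'' step and the passage from a finite-index ideal to a genuine integer multiple of $A$ are immediate.
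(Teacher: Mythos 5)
Your proof is correct and takes a genuinely different route from the paper's. Both proofs begin the same way: decompose $\pi$ into characters $\chi_1,\dots,\chi_k$ of the abelian group $A^2$, and use linear independence of characters to conclude that $SL(2,A)$ permutes the multiset $\{\chi_i\}$, so each $\chi_i$ has finite $SL(2,A)$-orbit in $\widehat{A^2}$. From there, the paper's argument first handles the case $A=\mathbb{Z}$: it locates for each atom $\phi_i$ an integer $n_i$ so that $\begin{pmatrix}1&n_i\\0&1\end{pmatrix}$ fixes $\phi_i$, multiplies these to get $q$, and uses the Weyl element to also kill $(0,q\mathbb{Z})$; it then bootstraps to general $A$ via the arithmetic fact that for every $a\in A$ there is $b\in A$ with $ab\in\mathbb{Z}$ (true for orders in number fields and their localizations), plugging this into an explicit matrix $\begin{pmatrix}a&n-1\\1&b\end{pmatrix}\in SL(2,A)$ to sweep out the ideal $qA$ by covariance. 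You instead stay in $A$ throughout: you observe that the $E_{12}$-orbit of $\chi_i$ is parametrized by $A/J_{\alpha_i}$ via the homomorphism $a\mapsto\gamma_a$, so finiteness of the orbit forces $J_{\alpha_i}$ to have finite additive index $n_i$, and then use the clean ring-theoretic fact that a finite-index ideal contains $n_i\cdot A$ because $n_i\cdot 1\in J_{\alpha_i}$ by Lagrange. This bypasses the reduction to $\mathbb{Z}$ and the arithmetic-specific lifting step entirely; indeed your argument applies verbatim to any unital commutative ring $A$, not just to localizations of orders, which the paper's bootstrap does not. The trade-off is that the paper's explicit matrix computation makes the role of the number-theoretic hypothesis on $A$ visible, whereas your argument reveals that this hypothesis is actually unnecessary for the lemma.
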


\begin{proof}
For $R = \IZ$, this is a (more or less) classical fact. Indeed, the character of the representation defines a $SL(2, \IZ)$-invariant probability measure on $\IT^2$. At the other side this measure is also a finite sum of atoms $\phi_1,\dots,\phi_k$. In particular, for each of the atomes $\phi_1$, there has to exist $n_i \in \IN$ such that the matrix
\begin{equation*}
\begin{pmatrix}
1 & n_i 
\\
0 & 1
\end{pmatrix} \in SL(2,\IZ)
\end{equation*}
fixes $\phi_i$. For $q = n_1 \dots n_k$, the matrix
\begin{equation*}
\begin{pmatrix}
1 & q 
\\
0 & 1
\end{pmatrix} \in SL(2,\IZ)
\end{equation*}

\medskip\n
will fix each of the characters. This just means that $\phi_i(t,s) = \phi_i(t + qs,s)$, for $1 \le i \le k$, and hence $\phi_i(qs,0) = 1$ for all $s \in \IZ$. Taking everything together, we conclude that $(q\IZ,0)$ acts trivially. The claim now follows since $(0,q\IZ)$ is in the orbit of $(q\IZ,0)$ under the $SL(2,\IZ)$-action.

\medskip
For more general $A$, we can conclude from the case $A = \IZ$ that for some $q \in \IZ \subset A$, the elements $(q,0)$ and $(0,q)$ act trivially. By $SL(2,A)$-covariance, we get that $g.(q,0)$ acts trivially for every $g \in SL(2,A)$. Let $a \in A$ be arbitrary and $b \in A$ such that $ab = n\in \IZ$. Then
\begin{equation*}
g = \begin{pmatrix}
a & n-1 
\\
1 &b
\end{pmatrix} \in SL(2,A)\,,
\end{equation*}

\medskip\n
and $g.(q,0) = (aq,q)$. We conclude that $(aq,0)$ and hence the whole ideal $qA \subset A$ which is generated by $q$ acts trivially. This finishes the proof.
\end{proof}

\medskip\n
{\it Proof of Theorem {\rm \ref{theo4.1}}:} Let $\pi: SL(n,A) \r U(k)$ be a finite dimensional unitary $\ve$-representation. By Kazhdan's Theorem (see Thm. 0.1), the restriction of $\pi$ to a standard unipotent copy of $A^2$ is $2\ve$-close to a unitary representation $\mu$. For $g \in SL(2,A)$, the representation $A^2 \ni t \longmapsto \mu(g.t)$ is $7\ve$-close to $t \longmapsto \pi(g)\,\mu(t) \,\pi(g)^{-1}$. Indeed, we compute
\begin{align*}
\mu(g.t)& \sim _{2 \ve} \,\pi(g.t) = \pi(gtg^{-1}) \sim
\\[1ex]
&\sim _{2 \ve} \,\pi(g) \, \pi(t) \, \pi(g^{-1})  \sim _{2 \ve}\,\pi(g) \, \mu(t) \, \pi(g^{-1})  \sim _{\ve}\,\pi(g) \, \mu(t) \, \pi(g)^{-1} \,,
\end{align*}

\n
where here and in the sequel we use the symbol $\sim_\varepsilon$ to indicate that two maps are $\varepsilon$-close.

\medskip
For $\ve < \frac{1}{7}$, this implies by Johnson's theorem (Thm. 3.2) that $t \longmapsto \mu(g.t)$ is unitarily equivalent to the unitary representation $\mu$. We conclude that the character of $\mu$ (viewed as an atomic measure on $\wh{A}^2$) is fixed by the natural action of $SL(2,A)$. This implies by Lemma \ref{lem4.4} that there exists $0 \not= q \in \IZ \subset A$ such that $\mu(a,0) = 1_n$ for all $a \in qA$.

\medskip
We conclude that $\pi(b,0)  \sim _{2 \ve}\, I_{d_k}$ for $b \in qA$. Since all elements in $E^\bullet(n,A;qA)$ are conjugate to $(b,0) \in A^2$ for some $b \in qA$, we conclude that for $g \in E^\bullet(n,A; qA)$, 
\begin{align*}
\pi(g) = \pi(t(b,0)t^{-1})  \sim _{2 \ve}\,\pi(t) \,\pi(b,0) \,\pi(t^{-1})  \sim _{2 \ve} \,\pi(t)\, \pi(t^{-1})  \sim _{2 \ve} \, I_{d_k}\,.
\end{align*}
Hence
\begin{equation*}
\pi(g)  \sim _{5 \ve}  \,I_{d_k}, \quad \forall g \in E^\bullet(n,A;qA)\,.
\end{equation*}

\medskip\n
Now, since $E(n,A; qA)$ is $r(A,n)$-boundedly generated by $E^\bullet(n,A;qA)$ (Thm. 4.3) we see that
\begin{equation*}
\pi(g)  \sim _{5 \ve (A,n)\ve} \,I_{d_k}, \quad \forall g \in E(n,A; qA)\,.
\end{equation*}

\n
Consider now the finite quotient
\begin{equation*}
Q(n,A;qA) = \dis\frac{SL(n,A)}{E(n,A;qA)}
\end{equation*}

\medskip\n
and pick a section $\sigma: Q(n,A; qA) \r SL(n,A)$. The composition $\pi \circ \sigma$: $Q(n,A;qA) \r U(k)$ defines a $(5 r(A,n) + 2)\ve$-representation of $Q(n,A;qA)$. Indeed, for $g,h \in Q(n,A; qA)$ we can compute
\begin{equation*}
\begin{array}{l}
\pi(\sigma(gh) = \pi(\sigma(g) \,\sigma(h) \,\sigma(h)^{-1} \,\sigma(g)^{-1} \,\sigma(gh))
\\[1ex]
\sim _{2 \ve} \,\pi(\sigma(g)) \,\pi(\sigma(h)) \,\pi(\sigma(h)^{-1} \,\sigma(g)^{-1}\,\sigma(gh)) \,\sim _{5 r(A,n)\ve} \,\pi(\sigma(g)) \,\pi(\sigma(h))\,.
\end{array}
\end{equation*}

\medskip\n
By Kazhdan's Theorem, $\pi \circ \sigma$ is $(10r(R,n) + 4)\ve$-close to a unitary representation $\pi''$ of $Q(n,A;qA)$. Composing $\pi''$ with the natural quotient map from $p: SL(n,A) \r Q(n,A;qA)$ we obtain a unitary representation $\pi' = \pi'' \circ p$ of $SL(n,A)$. Moreover, for all $g \in SL(n,A)$
\begin{equation*}
\begin{array}{l}
\pi'(g) = \pi'' \circ p(g) \sim _{(10 r(A,n) + 4)\ve} \,\pi \circ \sigma \circ p(g) \sim 
\\[1ex]
\sim_\ve \,\pi(g) \,\pi(g^{-1} \,\sigma(p(g)) \sim_{(5r(A,n) + 2)\ve} \,\pi(g)\,,
\end{array}
\end{equation*}

\medskip\n
and hence $\pi$ is $(15 r(A,n) + 7)\ve$-close to the unitary representation $\pi'$. This finishes the proof. \hfill $\square$

\medskip
Finally we turn to Theorem \ref{theo0.4} in the introduction; this will actually follow from Ozawa's theorem (\cite{Ozawa} Thm.~B) that any lattice in $SL(n,\IR)$, $n \ge 3$, satisfies property (TTT) and the following general result:
\begin{theorem}\label{theo4.5}
Let $\Gamma$ be a group which satisfies property {\rm (TTT)}. Then for every $d \in \IN$, 
\begin{equation*}
\lim\limits_{\delta \to 0^+} \,F_\Gamma^{(d)} (\delta) = 0\,.
\end{equation*}
\end{theorem}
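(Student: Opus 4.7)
The argument will be by contradiction, assembling a hypothetical non-correctable sequence of $\delta_n$-representations into a single uniformly bounded object on which property (TTT) applies, following the template of the proof of Theorem \ref{theo3.3}. The finite dimensionality of the target is used at the very end to descend the resulting correction from the ultraproduct back to $U(d)$.

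Suppose, toward a contradiction, that $\lim_{\delta \to 0^+} F_\Gamma^{(d)}(\delta) \neq 0$. Then there exist $c > 0$ and a sequence $\mu_n \in {\rm Map}_0(\Gamma, U(d))$ with $\delta_n := {\rm def}(\mu_n) \to 0^+$ and $D(\mu_n) \ge c$ for every $n$. Fix a non-principal ultrafilter $\omega$ on $\IN$ and let $\cH_\omega$ be the metric ultraproduct of copies of $\IC^d$. Since the defect vanishes along $\omega$, the pointwise $\omega$-limit $\mu_\omega(g) := \lim_\omega \mu_n(g)$ defines a genuine unitary representation $\mu_\omega : \Gamma \to U(\cH_\omega)$.

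Imitating the construction in the proof of Theorem \ref{theo3.3}, I would next form the upper triangular uniformly bounded representation
\[
\pi(g) \;=\; \begin{pmatrix} \mu_\omega(g) & D(g) \\ 0 & \mu_\omega(g) \end{pmatrix} \;\in\; B(\cH_\omega \oplus \cH_\omega),
\]
where $D(g)$ is built by selecting, for each $n$, a genuine $\omega_n \in {\rm Hom}(\Gamma, U(d))$ realizing $D(\mu_n)$ up to $1/n$ and taking $D(g) := \lim_\omega c^{-1}(\mu_n(g) - \omega_n(g))$. The cocycle relation $D(gh) = D(g)\mu_\omega(h) + \mu_\omega(g)D(h)$ holds in the limit because both $\mu_n$ and $\omega_n$ have vanishing defect in the ultraproduct, and $\sup_g \|D(g)\| \le 1$ by construction. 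Property (TTT), as captured by Ozawa's Theorem B in \cite{Ozawa}, then forces the uniformly bounded representation $\pi$ to be similar to the block-diagonal $\mu_\omega \oplus \mu_\omega$ via a similarity $I + T$ with $T \in B(\cH_\omega)$ bounded, i.e., $D$ is inner: $D(g) = T\mu_\omega(g) - \mu_\omega(g)T$.

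Unpacking this trivialization through the ultraproduct, and using polar decomposition (Lemma \ref{lem3.5}) to replace the small similarities by nearby unitaries, yields for $\omega$-almost every $n$ a unitary $u_n \in U(d)$ with $u_n \omega_n(\cdot) u_n^{-1}$ uniformly close to $\mu_n$. This forces $D(\mu_n) \to 0$ along $\omega$, contradicting the standing assumption $D(\mu_n) \ge c$. The main obstacle in carrying this out is the descent step: the derivation $D$ and its inner implementer $T$ live on the ultraproduct $\cH_\omega$, and one must verify that the conjugating similarities descend entry-by-entry to operators on $\IC^d$; this is precisely where the hypothesis $\dim = d < \infty$ enters, via the compactness of $U(d)$, which guarantees that an ultraproduct unitary arising in this way is represented by a genuine $U(d)$-valued sequence.
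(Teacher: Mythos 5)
Your proposal takes a genuinely different route from the paper, and it has several gaps that I do not see how to close.

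The paper's argument is short and direct: take $\pi_n$ with defect $\le 1/n$ but $D(\pi_n)\ge\varepsilon$, pass (by compactness of $U(d)$) to a pointwise limit $\pi$ (which is a genuine representation), view $M_d(\IC)$ as a Hilbert space with the normalized Hilbert--Schmidt norm, and consider $\sigma_n(g)x=\pi_n(g)x\pi(g)^*$. The vector $\sigma_n(\cdot)I - I$ is a quasi-cocycle with uniformly vanishing defect and pointwise small values, and property (TTT) is used precisely in its quantitative form to upgrade this pointwise smallness to uniform smallness, giving $\|\pi_n-\pi\|\to 0$ uniformly and hence $D(\pi_n)\to 0$.

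Your argument instead tries to recycle the derivation-and-unitarizability template from Theorem \ref{theo3.3}, and this fails at several points. First, the claim that property (TTT) forces the uniformly bounded representation $\pi$ to be similar to the diagonal (i.e.\ that $D$ is inner) is not the content of property (TTT). Property (TTT) is a quantitative statement about quasi-cocycles into Hilbert modules, not a unitarizability statement; Theorem \ref{theo3.3} invokes unitarizability, which is a different (and unrelated in general) hypothesis. Second, even granting that $D$ were inner, the argument does not close. In Theorem \ref{theo3.3} the crucial point is the scaling by $n$: the representations there are $1/n$-close, $D_n=n(\nu_n-\mu_n)$ is uniformly bounded, and the implementing intertwiner becomes $1-n^{-1}T^{1,2}_{nn}\to I$ because $n\to\infty$. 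In your setup the defect of $\mu_n$ tends to $0$ but the gap $D(\mu_n)\ge c$ stays bounded away from $0$, so you can only normalize by the fixed constant $c^{-1}$; the resulting intertwiner $I+cT_n$ has no reason to approach $I$, and you cannot unitarize it into something close to $I$ via Lemma \ref{lem3.5}. Third, the cocycle identity is written incorrectly: since $D$ is built from the difference $\mu_n-\omega_n$, the relation is $D(gh)=\mu_\omega(g)D(h)+D(g)\omega_\omega(h)$, a derivation between $\mu_\omega$ and $\omega_\omega:=\lim_\omega\omega_n$, so the lower-right entry of $\pi(g)$ must be $\omega_\omega(g)$, not $\mu_\omega(g)$. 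Fourth, the ultralimit gives only pointwise (in $g$) convergence; you would need uniform convergence over $\Gamma$ to conclude $D(\mu_n)\to 0$, and nothing in the inner-derivation step produces that. (As a side remark, the ultraproduct construction is vacuous here: since $\IC^d$ is finite-dimensional, $\cH_\omega\cong\IC^d$, so one gains nothing over just taking pointwise limits by compactness.) You should instead look at the paper's HS-norm conjugation trick, which is where property (TTT) actually enters.
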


\begin{proof}
We prove by contradiction, and assume  that there are $d \in \IN$, $\varepsilon > 0$ and $(1/n)$-representations $\pi_n: \Gamma \rightarrow U(d)$ which are $\varepsilon$ away from honest representations. We may moreover assume that $\pi_n$ converges pointwise to a map $\pi$, which is necessarily a representation. We view the $d \times d$ matrix algebra as the Hilbert space $H S(d)$ with the normalized Hilbert-Schmidt norm (it is normalized so that $\|I\|_{HS} = 1$), and consider $\sigma_n : \Gamma \rightarrow B(H S(d))$ defined by
\begin{equation*}
\sigma_n(g)x = \pi_n(g) x\pi(g)^*\,.
\end{equation*}
Then, one has
\begin{equation*}
\|\sigma_n(gh) I - \sigma_n(g) \sigma_n(h) I\|_{HS} = \|\pi_n(gh) - \pi_n(g) \pi_n(h) \|_{HS} \le \dis\frac{1}{n} \rightarrow 0
\end{equation*}
uniformly for $g, h \in \Gamma$, and
\begin{equation*}
\|\sigma_n(g) I - I\|_{HS} = \|\pi_n(g) - \pi(g)\|_{HS} \rightarrow 0
\end{equation*}

\n
for every $g \in \Gamma$. It follows from property (TTT) (or property $(T_Q)$, see \cite{Ozawa} Sect. 3) that
\begin{equation*}
\|\pi_n(g) - \pi(g)\| \le d^{1/2} \|\pi_n(g) - \pi(g)\|_{HS} = d^{1/2} \|\sigma_n(g) I - I\|_{HS} \rightarrow 0
\end{equation*}
uniformly for $g \in \Gamma$. A contradiction.
\end{proof}

\newpage

\vfill\n
{\it Affiliation:}

\medskip\n
M. Burger, Departement Mathematik, ETH-Z\"urich, 8092 Z\"urich, Switzerland. \\
burger@ math.ethz.ch

\bigskip\n
N. Ozawa, Department of Mathematical Sciences, Tokyo, 153-8914, Japan. \\
narutaka@ms.u-tokyo.ac.jp

\bigskip\n
A. Thom, Mathematisches Institut, Universit\"at Leipzig, Johannisgasse 26, 04103 Leipzig, Germany.\\
thom@math.uni-leipzig.de

\end{document}